\newtheorem{theorem}{Theorem}[section]
\newtheorem{proposition}[theorem]{Proposition}
\newtheorem{lemma}[theorem]{Lemma}
\newtheorem{corollary}[theorem]{Corollary}
\theoremstyle{definition}
\newtheorem{definition}[theorem]{Definition}
\theoremstyle{remark}
\newtheorem*{remark}{Remark}
\numberwithin{equation}{section}
\title[Nodal set comparison for Allen--Cahn solutions with conical asymptotics]
      {Nodal set comparison for Allen--Cahn solutions with conical asymptotics}
\author{Sanghoon Lee}
\address{Department of Mathematics  \\  The Chinese University of Hong Kong \\ Shatin, N.T., Hong Kong}
\email{sanghoonlee@cuhk.edu.hk}
\thanks{}
\author{Taehun Lee}
\address{Department of Mathematics, Konkuk University, Seoul 05029, Korea}
\email{taehun@konkuk.ac.kr}
\thanks{T. Lee was supported by the NRF grant funded by the Korea government (MSIT) (RS-2023-00211258).}
\begin{document}
\subjclass[2020]{Primary 35J61; Secondary 35B08, 35B50, 53A10}
\keywords{Allen--Cahn equation, minimal cone, nodal set, comparison principle}
\maketitle

\begin{abstract}

We establish a comparison principle for entire solutions of the Allen–Cahn equation whose nodal sets, possibly singular, are asymptotic to a regular minimizing hypercone. We show that inclusion of the positive phases enforces a global ordering of the solutions. As a consequence, the positive phase uniquely determines the solution, and strict phase inclusion implies that the corresponding nodal sets are disjoint. Our analysis relies on a maximum principle for the linearized operator on unbounded domains that are not necessarily smooth, and yields an Allen–Cahn analogue of the strong maximum principle for minimal hypersurfaces.

\end{abstract}
\section{Introduction}

This paper concerns an ordering property and rigidity results for entire solutions to the Allen--Cahn equation
\begin{align}\label{eq:main}
\Delta u + u-u^3 = 0 \quad \text{in } \mathbb{R}^{n},
\end{align}
specifically focusing on solutions whose nodal sets are asymptotic to a regular minimizing hypercone $\mathcal{C}$ (that is, a minimizing hypercone with the origin as its only singularity).
Equation \eqref{eq:main} is a classical model arising in the theory of phase transitions, describing the separation of a binary mixture into two stable phases represented by $u = \pm 1$.

A central theme in the analysis of \eqref{eq:main} is the deep connection between the nodal sets (i.e., zero level sets) of entire solutions and minimal hypersurfaces.
This connection was famously highlighted by De Giorgi’s conjecture \cite{DeG79}, which asserts that any bounded solution that is monotone in one direction must be one-dimensional—equivalently, must have planar level sets—in dimensions $n \le 8$.
The conjecture was affirmatively resolved by Ghoussoub and Gui \cite{GG98} for $n = 2$, and by Ambrosio and Cabré \cite{AC00} for $n = 3$. It was later established by Savin \cite{Savin09} for $4 \le n \le 8$ under a mild additional assumption. An alternative proof was subsequently given by Wang \cite{Keleiwang}.
In contrast, in dimensions $n \ge 9$, del Pino, Kowalczyk, and Wei \cite{dPKW11} constructed a counterexample, demonstrating that the dimensional restriction $n \le 8$ in De Giorgi’s conjecture is sharp. This result may be viewed as the Allen–Cahn analogue of the celebrated counterexample of Bombieri, De Giorgi, and Giusti \cite{BDGG69}.

There has been extensive study over the past decades on the structure of nodal sets and the moduli space of solutions to the Allen–Cahn equation; see, for example, \cite{HLS+21, MR2227143, MR3945835, MR3743704, MR1803974, MR4831349, MR2434903, MR2948876, MR2995371, MR3019512, MR3935478, MR4021161, MR4050103, chan2025globalstablesolutionsfree}, to list only a few.
More recently, significant progress toward the stable version of De Giorgi’s conjecture in dimension $n=4$ was made by Florit–Simon and Serra \cite{floritsimon2025stablesolutionsallencahnequation}.
Nevertheless, our understanding in this direction remains substantially less developed than in the theory of stable minimal hypersurfaces, most notably in connection with the stable Bernstein problem, both in low and high dimensions.

In dimensions $n \ge 8$, a variety of examples are known whose nodal sets are non-planar. In particular, solutions whose nodal sets are given precisely by Simons cone $\mathcal{C}_{m,m}$,
\begin{align}\label{eq:simonscone} \mathcal{C}_{m,m} = \big\{ (x, y) \in \mathbb{R}^m \times \mathbb{R}^m : |x| = |y| \big\}, 
\end{align}
have been constructed and studied by Cabré and Terra \cite{CT09, CT10, Cabre12}. These are known as saddle-shaped solutions (see \Cref{def:saddle}). Moreover, motivated by the geometric picture provided by the Hardt–Simon foliation \cite{HS85}—a foliation of $\mathbb{R}^{2m} \setminus \mathcal{C}_{m,m}$ by a one-parameter family of smooth minimal hypersurfaces asymptotic to $\mathcal{C}_{m,m}$—  Liu, Wang, and Wei \cite{LWW17} constructed entire solutions whose nodal sets are asymptotic to these leaves and proved that they are global minimizers, using the family of solutions constructed by Pacard and Wei \cite{PW13} as barriers.
More recently, Agudelo and Rizzi \cite{AR25_arxiv} established analogous existence results for solutions whose nodal sets are asymptotic to Lawson cones.

Inspired by the uniqueness of the Hardt–Simon foliation, it is natural to ask whether the nodal sets of the solutions constructed above themselves foliate Euclidean space, and whether these solutions exhaust all entire solutions whose nodal sets are asymptotic to Simons cone (see \cite{Solomon, Nick} in the setting of minimal hypersurfaces).
As a necessary step toward these goals, we establish the following strong maximum principle for nodal sets.

\begin{theorem}\label{thm:main-order}
Let $\mathcal{C} \subset \mathbb{R}^n$ be a regular minimizing hypercone. Suppose that $u_1$ and $u_2$ are nonconstant solutions of \eqref{eq:main} whose nodal sets are asymptotic to $\mathcal{C}$ in the $C^1$ sense (see Definition~\ref{def:asymptotic_C1}).
If $\{u_1 > 0\} \subseteq \{u_2 > 0\}$, then $u_1 \le u_2$ in $\mathbb{R}^n$. Moreover, either $u_1 \equiv u_2$ or $u_1 < u_2$ everywhere.
\end{theorem}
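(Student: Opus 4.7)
My plan is proof by contradiction. Set $\phi := u_1 - u_2$ and $\Omega := \{\phi > 0\}$, and suppose $\Omega \ne \emptyset$. The hypothesis $\{u_1 > 0\} \subseteq \{u_2 > 0\}$ (equivalently $\{u_2 \le 0\} \subseteq \{u_1 \le 0\}$) forces $u_1$ and $u_2$ to have the same sign at every point of $\Omega$: up to the measure-zero stratum $\Omega \cap \{u_1 = 0\}$, one has $\Omega = \Omega_+ \sqcup \Omega_-$, with $\Omega_+ \subseteq \{u_1 > u_2 > 0\}$ and $\Omega_- \subseteq \{u_2 < u_1 < 0\}$, and $\partial \Omega \subseteq \{u_1 = u_2\}$. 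It suffices to rule out $\Omega_+$; the case of $\Omega_-$ is analogous, replacing the role of $u_2$ by $-u_1$ in the calculations below.

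Subtracting the two copies of \eqref{eq:main} yields the linear elliptic equation
\begin{equation*}
\Delta \phi + c(x)\,\phi = 0 \quad \text{on } \Omega_+, \qquad c(x) := 1 - u_1^2 - u_1 u_2 - u_2^2,
\end{equation*}
with $\phi = 0$ on $\partial \Omega_+$ and $\phi > 0$ inside. A direct computation shows that $u_2$ itself is a positive supersolution of the same equation on $\Omega_+$:
\begin{equation*}
\Delta u_2 + c(x)\,u_2 = -\,u_1 u_2 (u_1 + u_2) \le 0 \quad \text{on } \Omega_+.
\end{equation*}
Writing $\phi = v\,u_2$, the ratio $v > 0$ then satisfies the divergence-form identity
\begin{equation*}
\operatorname{div}\bigl(u_2^2 \nabla v\bigr) = u_1 u_2^2 (u_1 + u_2)\,v \ge 0 \quad \text{on } \Omega_+,
\end{equation*}
so that $v$ is a subsolution, without zeroth-order term, of a uniformly elliptic operator in the interior of $\Omega_+$, vanishing on the smooth part $\{u_1 = u_2 > 0\}$ of $\partial \Omega_+$.

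The remaining step is a maximum principle for $v$ on the unbounded, possibly non-smooth domain $\Omega_+$, which should give $v \le 0$ and hence a contradiction with $v > 0$. Two inputs are needed. First, a quantitative form of the $C^1$-asymptotic hypothesis: along rays in $\Omega_+$ going to infinity, both $u_i$ approach the one-dimensional heteroclinic profile transverse to $\mathcal C$, from which $\phi \to 0$ and $v \to 0$ follow. Second, a comparison argument allowing one to ignore the portion of $\partial \Omega_+$ meeting the common nodal set $\{u_1 = u_2 = 0\}$, and in particular the conical vertex, where the weight $u_2^2$ degenerates and the boundary is geometrically singular. This second point is precisely the \emph{maximum principle for the linearized operator on unbounded, not necessarily smooth domains} promised in the abstract; it is the principal technical obstacle, and should be handled by a capacity-type argument that exploits the fact that the singular stratum of a regular minimizing hypercone has codimension at least two in $\mathbb R^n$.

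Once $u_1 \le u_2$ is established, the dichotomy ``$u_1 \equiv u_2$ or $u_1 < u_2$ everywhere'' follows from the classical strong maximum principle applied to the nonnegative function $w := u_2 - u_1$, which satisfies the linear elliptic equation $\Delta w + c(x)\,w = 0$ on all of $\mathbb R^n$.
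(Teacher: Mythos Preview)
Your overall strategy---subtract, linearize, divide by a positive supersolution, and invoke a maximum principle on an unbounded domain---is the right shape, and the final dichotomy via the strong maximum principle is exactly how the paper concludes. But the heart of the argument, the maximum principle for $v=\phi/u_2$ on $\Omega_+$, has a real gap, and the paper handles it by a different route.

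The difficulty you identify is correct: the weight $u_2^2$ degenerates along $\partial\Omega_+\cap\{u_2=0\}=\{u_1=u_2=0\}$. Your proposed fix---a capacity argument based on the codimension of the singular stratum of the minimizing cone $\mathcal C$---does not address this. The singular set of a \emph{regular} cone is just the origin; that is not the obstacle. The obstacle is the contact set $\{u_1=0\}\cap\{u_2=0\}$, which is an intersection of two nodal hypersurfaces whose structure you know nothing about a priori: it could be empty, a point, or (for all you know at this stage) a piece of hypersurface where the two nodal sets coincide. There is no codimension-two information available, so a capacity cutoff cannot be justified. Relatedly, $\Omega_+=\{u_1>u_2>0\}$ is not the positive phase of any single Allen--Cahn solution, so the structure theory for nodal sets (finite vanishing order, uniform density of the complement) does not apply to $\partial\Omega_+$.

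The paper sidesteps this entirely by inserting an auxiliary \emph{minimal} solution: on $\Omega_1=\{u_1>0\}$ one constructs (\Cref{lem:minsol}) a solution $u$ with $0<u\le\min\{u_1,u_2\}$ in $\Omega_1$ and $u=0$ on $\partial\Omega_1$. One then compares $u_1$ with $u$ (not with $u_2$): the difference $v:=u_1-u\ge 0$ satisfies $L_u v=(u_1-u)^2(u_1+2u)\ge 0$ in $\Omega_1$, vanishes on $\partial\Omega_1$, and tends to zero at infinity by \Cref{thm:asym}. The point is that now the domain $\Omega_1$ is \emph{exactly} the positive phase of the Allen--Cahn solution $u$, so the maximum principle of \Cref{lem:max-principle-unbounded}---whose proof uses analyticity, bounded vanishing order, and a uniform density estimate for the complement at every boundary point, not a capacity argument---applies and gives $v\le 0$, hence $u_1\equiv u\le u_2$ in $\Omega_1$. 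The introduction of the minimal solution is precisely what lets one trade the uncontrolled domain $\Omega_+$ for the well-structured domain $\Omega_1$.
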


Strict inclusion of the positive phases forces the corresponding solutions to be strictly ordered, thereby ensuring that their nodal sets never touch. This result may be viewed as an Allen–Cahn analogue of the strong maximum principle for minimal hypersurfaces \cite{Simon87, SW89, Ilmanen96, Wickramasekera14}. However, it is important to note that a purely local version of such a strong maximum principle for nodal sets is not available in general.
Consequently, \Cref{thm:main-order} implies that the nodal set uniquely determines the solution. This generalizes the uniqueness theorem of Cabré \cite{Cabre12} to the setting in which the nodal set is asymptotic to a cone rather than coinciding exactly with it, and also allows for the presence of singularities on the nodal set.

\begin{corollary}[Uniqueness of solutions]\label{cor:phase-equality}
Let $u_1$ and $u_2$ be solutions satisfying the assumptions of \Cref{thm:main-order}. If their positive phases coincide, that is, $\{u_1 > 0\} = \{u_2 > 0\}$, then $u_1 \equiv u_2$ in $\mathbb{R}^n$.
\end{corollary}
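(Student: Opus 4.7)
The plan is to obtain this corollary as an immediate consequence of the ordering statement in \Cref{thm:main-order}, by applying that theorem symmetrically in the two solutions. Since the hypothesis $\{u_1 > 0\} = \{u_2 > 0\}$ is symmetric in $u_1$ and $u_2$, both inclusions $\{u_1 > 0\} \subseteq \{u_2 > 0\}$ and $\{u_2 > 0\} \subseteq \{u_1 > 0\}$ hold simultaneously. Moreover, both $u_1$ and $u_2$ already satisfy all the hypotheses required to invoke \Cref{thm:main-order}: they are nonconstant solutions of \eqref{eq:main} whose nodal sets are asymptotic to the regular minimizing hypercone $\mathcal{C}$ in the $C^1$ sense.

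The first step is to apply \Cref{thm:main-order} to the pair $(u_1, u_2)$ using the inclusion $\{u_1 > 0\} \subseteq \{u_2 > 0\}$, obtaining $u_1 \le u_2$ in $\mathbb{R}^n$. The second step is to swap the roles of $u_1$ and $u_2$ and apply \Cref{thm:main-order} to $(u_2, u_1)$ using the reverse inclusion $\{u_2 > 0\} \subseteq \{u_1 > 0\}$, which yields $u_2 \le u_1$ in $\mathbb{R}^n$. Combining the two inequalities gives $u_1 \equiv u_2$.

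Equivalently, one could invoke the dichotomy in \Cref{thm:main-order}: from $u_1 \le u_2$ we have either $u_1 \equiv u_2$ or $u_1 < u_2$ everywhere; the latter alternative is incompatible with equality of the positive phases at any nodal point of $u_2$, so we must be in the first case. Since the argument is entirely formal—nothing beyond \Cref{thm:main-order} and its symmetry in the two solutions is used—there is no genuine obstacle to overcome here; the content of the corollary lies entirely in the strong ordering established by \Cref{thm:main-order}.
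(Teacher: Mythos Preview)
Your argument is correct and is exactly the intended one: the paper does not even give a separate proof of this corollary, presenting it as an immediate consequence of \Cref{thm:main-order}, and the symmetric double application you describe is precisely how one reads it off.
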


Our proof extends the framework of the maximum principle for the linearized operator on unbounded domains developed by Cabré \cite{Cabre12}.
A key distinction is that we work with domains whose boundaries are not necessarily smooth and may exhibit more general types of singularities beyond the conical case.
This added complexity necessitates a generalization of several fundamental analytic tools—including the maximum principle, the analysis of asymptotic behavior, and the existence theory for minimal solutions—to this broader asymptotic setting. In particular, we carry out a refined analysis near the singular part of the nodal set, namely $\{ u = 0 \} \cap \{ |\nabla u | = 0 \}$.

We remark that the condition of being asymptotic to $\mathcal{C}$ in the $C^1$ sense is satisfied by a broad class of solutions in the literature. This includes not only the saddle-shaped solutions whose nodal set coincides with the cone itself, as studied in \cite{CT09,CT10}, but also the solutions whose nodal sets correspond to the Hardt-Simon leaves associated with the cone, such as those constructed in \cite{PW13, LWW17}.

The paper is organized as follows.
In \Cref{sec:prelim}, we present the geometric setup and recall standard preliminary results, including stability properties.
\Cref{sec:asymp} establishes the uniform convergence of solutions to the 1D profile at infinity.
Finally, in \Cref{sec:comparison}, we provide the proof of \Cref{thm:main-order}.
To achieve this, we first derive a maximum principle for the linearized operator on unbounded domains and establish the existence of minimal barrier solutions.
We conclude the paper by presenting direct consequences of the main theorem, including strict ordering and comparisons with the saddle-shaped solution.

\section{Preliminaries}\label{sec:prelim}

We first recall the notion of saddle-shaped solutions associated with the Simons cone.

\begin{definition}[Saddle-shaped solution]\label{def:saddle}
We say that a bounded solution $u: \mathbb{R}^{2m} \to \mathbb{R}$ of \eqref{eq:main} is a \textit{saddle-shaped solution} if, with the radial coordinates $s = |x|$ and $t = |y|$ for $(x, y) \in \mathbb{R}^m \times \mathbb{R}^m$, it satisfies:
\begin{enumerate}[label=(\roman*)]
    \item $u$ depends only on the variables $s$ and $t$, written as $u(x, y) = u(s, t)$.
    \item $u > 0$ in the region $\mathcal{C}^+ = \{ (x, y) \in \mathbb{R}^{2m} : s > t \}$.
    \item $u$ is odd with respect to the diagonal, i.e., $u(s, t) = -u(t, s)$ in $\mathbb{R}^{2m}$.
\end{enumerate}
\end{definition}

Next, we introduce the geometric setup for regular minimal cones to define the asymptotic behavior of nodal sets. Let $\mathcal{C}$ be a regular minimal cone in $\mathbb{R}^n$ (that is, $\mathcal{C}$ has an isolated singularity at the origin). We denote the normal vector field defined over $\mathcal{C}\setminus B_R$ for some $R>0$ as $\eta$, and let $S$ be the cross section $\mathcal{C}\cap\mathbb{S}^{n-1}$ of the cone $\mathcal{C}$. We use the coordinate system $(z, r) = rz \in \mathcal{C} \subset \mathbb{R}^{n}$ on the cone, where $z \in S$ and $r>0$.

\begin{definition}[Asymptotic to a cone in the $C^1$ sense]\label{def:asymptotic_C1}
Let $\mathcal{C} \subset \mathbb{R}^n$ be a regular minimal cone. We say that the boundary $\Gamma = \partial \Omega$ of an open set $\Omega\subset \mathbb{R}^n$ is \textit{asymptotic to the cone $\mathcal{C}$ in the $C^1$ sense} if it satisfies the following condition:

There exist a radius $R > 0$ and a compact set $K \subset \mathbb{R}^n$ such that $\Gamma \setminus K$ can be represented as a normal graph over $\mathcal{C} \setminus \overline{B}_R$:
\begin{equation}\label{eq:normal_graph}
\Gamma \setminus K = \left\{ rz + h(r, z)\nu_{\mathcal{C}}(rz) \mid r > R, \, z \in S \right\},
\end{equation}
where $S = \mathcal{C} \cap \mathbb{S}^{n-1}$, $\nu_{\mathcal{C}}$ is a unit normal vector field on $\mathcal{C}$, and the function $h \in C^1((R, \infty) \times S)$ satisfies the decay condition
\begin{equation}\label{eq:C1_decay}
\lim_{r \to \infty} \sup_{z \in S} \left( |h(r, z)| + |\partial_r h(r, z)| + \frac{1}{r} |\nabla_S h(r, z)| \right) = 0.
\end{equation}
Here, $\nabla_S$ denotes the covariant derivative on the link $S$.
\end{definition}

We recall the strong maximum principle and the unique continuation property for the Allen--Cahn equation. The proofs are standard and are therefore omitted.

\begin{lemma}[Strong maximum principle]\label{lem:strong_mp}
Let $\Omega \subset \mathbb{R}^n$ be a domain. Suppose that $u$ and $v$ are solutions to \eqref{eq:main} in $\Omega$ with $u \le v$ in $\Omega$. If there exists a point $x_0 \in \Omega$ such that $u(x_0) = v(x_0)$, then $u \equiv v$ in $\Omega$.
\end{lemma}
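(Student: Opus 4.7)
The strategy is the standard linearization trick that reduces the statement to the strong maximum principle for linear elliptic operators. Set $w \coloneqq v - u$, so that $w \in C^2(\Omega)$ satisfies $w \ge 0$ in $\Omega$ and $w(x_0) = 0$. Subtracting the two PDEs gives
\begin{equation*}
\Delta w = (v - v^3) \cdot(-1) - (u - u^3)\cdot(-1) \cdot \text{(rewritten)} = (v^3 - u^3) - (v - u),
\end{equation*}
and the factorization $v^3 - u^3 = (v-u)(u^2 + uv + v^2)$ yields
\begin{equation*}
\Delta w + c(x)\, w = 0, \qquad c(x) \coloneqq 1 - u(x)^2 - u(x)v(x) - v(x)^2.
\end{equation*}
Since $u, v \in C^2(\Omega)$ are continuous, $c$ is locally bounded, and this is a linear uniformly elliptic equation for $w$.

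The second step is to invoke Hopf's strong minimum principle. Because $c$ has no fixed sign, I would shift by a constant: pick $C > 0$ larger than $\sup_{K}|c|$ on any compact set $K \subset \Omega$ containing $x_0$, and observe that on $K$
\begin{equation*}
\Delta w - C w = -(c + C)\, w \le 0,
\end{equation*}
since $w \ge 0$ and $c + C \ge 0$. This is a linear elliptic inequality with nonpositive zero-order coefficient ($-C \le 0$), to which the classical Hopf strong minimum principle applies: a nonnegative supersolution that achieves the value $0$ at an interior point must vanish identically on the connected component containing that point. Consequently $w \equiv 0$ on the component of $\Omega$ containing $x_0$, and then a standard connectedness/propagation argument (covering $\Omega$ by overlapping balls and iterating the same argument) extends the conclusion to all of $\Omega$.

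There is essentially no analytic obstacle here, since the equation is smooth and the coefficients locally bounded; the only minor care needed is the sign-indefiniteness of $c(x)$, handled by the constant shift above. The statement can equivalently be phrased as a unique continuation assertion for the linearized Allen--Cahn operator $\Delta + (1 - 3u^2)$ along the one-parameter family interpolating between $u$ and $v$, but the direct factorization is cleaner and does not require any integration in a parameter.
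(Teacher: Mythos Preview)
The paper omits the proof entirely, stating only that it is standard. Your argument is exactly the standard one---linearize by factoring $v^3-u^3=(v-u)(u^2+uv+v^2)$ to obtain a linear equation $\Delta w + c(x)w = 0$ for $w=v-u\ge 0$, then apply the strong maximum principle at the interior zero minimum---and it is correct. Two minor remarks: the constant-shift step is slightly more than necessary, since the Hopf strong maximum principle already allows arbitrary bounded $c$ when the extremum value is zero (see the remark following Theorem~3.5 in Gilbarg--Trudinger); and since $\Omega$ is assumed connected, the open-and-closed argument suffices without the ball-covering iteration.
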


\begin{lemma}[Unique continuation principle]\label{lem:ucp}
Let $\Omega \subset \mathbb{R}^n$ be a domain. Suppose that $u$ and $v$ are solutions to \eqref{eq:main} in $\Omega$. If $u = v$ on a non-empty open subset of $\Omega$, then $u \equiv v$ in $\Omega$.
\end{lemma}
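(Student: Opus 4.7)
The plan is to reduce the claim to the real analyticity of solutions of \eqref{eq:main}. Since the nonlinearity $f(u) = u^3 - u$ is entire and the Laplacian has analytic (constant) coefficients, standard elliptic bootstrapping first yields $u, v \in C^\infty(\Omega)$, and then the Morrey--Friedman theorem on analyticity of solutions to analytic elliptic equations promotes these to real analytic functions on $\Omega$. Consequently, the difference $w := u - v$ is real analytic on the connected open set $\Omega$.

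With analyticity in hand, the conclusion is straightforward. If $w$ vanishes on a nonempty open set $U \subset \Omega$, then every partial derivative of $w$ also vanishes on $U$. Given any point $x \in \Omega$, one connects $x$ to a point of $U$ by a path contained in $\Omega$, covers the path by finitely many balls lying in $\Omega$ on which the Taylor series of $w$ converges, and propagates the vanishing one ball at a time. This forces $w \equiv 0$, that is, $u \equiv v$ on all of $\Omega$.

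An alternative and slightly more self-contained route bypasses analyticity altogether: setting $c(x) := 1 - \bigl(u^2(x) + u(x)v(x) + v^2(x)\bigr)$, the difference $w$ satisfies the linear second-order elliptic equation
\[
\Delta w + c(x)\, w = 0 \qquad \text{in } \Omega,
\]
with $c$ locally bounded (a fortiori $c \in L^\infty_{\mathrm{loc}}(\Omega)$, since $u, v \in C^2(\Omega)$). Aronszajn's classical unique continuation theorem then forces $w \equiv 0$ on the connected domain $\Omega$. I do not foresee any genuine obstacle; the only point requiring a little care is to cite the regularity or unique-continuation result in a form valid on an arbitrary domain $\Omega$ and without global boundedness hypotheses on $u, v$, which both routes accommodate.
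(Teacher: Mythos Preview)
Your proposal is correct. The paper actually omits the proof of this lemma, stating only that it is ``standard.'' Both routes you outline---real analyticity via Morrey's theorem followed by the identity principle, and Aronszajn's unique continuation applied to the linear equation $\Delta w + c(x)w = 0$ with $c = 1 - (u^2 + uv + v^2) \in L^\infty_{\mathrm{loc}}$---are valid and standard. It is worth noting that elsewhere in the paper (in the proof of the maximum principle on unbounded domains) the authors explicitly invoke the real analyticity of Allen--Cahn solutions, so your first route is in fact the one implicitly adopted.
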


The following proposition establishes the stability of positive solutions. This result corresponds to Remark 3.3 in \cite{CT10}, but we include a proof for completeness.

\begin{proposition}[Stability of positive solutions]\label{prop:pos_stability}
Let $\Omega \subset \mathbb{R}^n$ be a domain (possibly unbounded). Suppose that $u$ is a solution of the Allen--Cahn equation $-\Delta u = u - u^3$ in $\Omega$ such that $0 < u < 1$ in $\Omega$. Then $u$ is a stable solution in $\Omega$, i.e.,
\[
\int_{\Omega} \left( |\nabla \xi|^2 - (1 - 3u^2)\xi^2 \right) \, dx \ge 0 \quad \text{for all } \xi \in C_c^\infty(\Omega).
\]
\end{proposition}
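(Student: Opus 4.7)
The plan is to use the standard ground-state substitution (an Allegretto--Piepenbrink type argument): since $u$ itself is strictly positive in $\Omega$ and satisfies $-\Delta u = (1-u^2)u$, it serves as a positive ``principal eigenfunction'' for a related linear operator, and this forces nonnegativity of the quadratic form with potential $1-3u^2$ up to a manifestly nonnegative remainder.

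Concretely, given $\xi \in C_c^\infty(\Omega)$, I would define $\psi := \xi/u$. Because $u$ is smooth and strictly positive in $\Omega$, the function $\psi$ lies in $C_c^\infty(\Omega)$ with the same support as $\xi$, so no boundary terms will appear when integrating by parts. Writing $\xi = u\psi$ and expanding yields
\begin{equation*}
|\nabla \xi|^2 \;=\; \psi^2 |\nabla u|^2 \;+\; \tfrac{1}{2}\nabla(u^2)\cdot\nabla(\psi^2) \;+\; u^2|\nabla\psi|^2.
\end{equation*}
Integrating the middle term by parts against the compactly supported $\psi^2$ gives
\begin{equation*}
\int_{\Omega}\tfrac{1}{2}\nabla(u^2)\cdot\nabla(\psi^2)\,dx \;=\; -\int_{\Omega}\bigl(u\,\Delta u + |\nabla u|^2\bigr)\psi^2\,dx,
\end{equation*}
so the $|\nabla u|^2$ terms cancel and one obtains
\begin{equation*}
\int_{\Omega}|\nabla\xi|^2\,dx \;=\; -\int_{\Omega} u\,\Delta u\,\psi^2\,dx \;+\; \int_{\Omega} u^2|\nabla\psi|^2\,dx.
\end{equation*}

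Next I would insert the PDE $-\Delta u = (1-u^2)u$, which converts the first integrand into $(1-u^2)u^2\psi^2$. Subtracting $\int(1-3u^2)\xi^2\,dx = \int(1-3u^2)u^2\psi^2\,dx$ and using the identity $(1-u^2)-(1-3u^2) = 2u^2$ produces
\begin{equation*}
\int_{\Omega}\bigl(|\nabla\xi|^2 - (1-3u^2)\xi^2\bigr)dx \;=\; \int_{\Omega} 2u^4\psi^2\,dx \;+\; \int_{\Omega} u^2|\nabla\psi|^2\,dx \;\ge\; 0,
\end{equation*}
which is the desired stability inequality. There is no real obstacle here: the only point requiring mild care is checking that $\psi = \xi/u \in C_c^\infty(\Omega)$, which follows from elliptic regularity of $u$ together with the hypothesis $u>0$ in $\Omega$. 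The bound $u<1$ is not actually used, consistent with the fact that the argument yields stability for \emph{any} strictly positive solution.
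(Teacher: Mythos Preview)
Your proof is correct and follows essentially the same ground-state substitution as the paper: the paper multiplies the pointwise inequality $(1-3u^2)u \le -\Delta u$ by $\xi^2/u$, integrates by parts, and applies the Picone-type identity $\nabla u\cdot\nabla(\xi^2/u)=|\nabla\xi|^2-|u\nabla(\xi/u)|^2$, which is exactly your computation packaged as a chain of inequalities rather than an identity with the explicit nonnegative remainder $\int(2u^4\psi^2+u^2|\nabla\psi|^2)\,dx$. Your observation that the bound $u<1$ is not actually needed is also correct---the paper's argument likewise only uses $u>0$.
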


\begin{proof}
Since $0 < u < 1$, we have $(1 - 3u^2)u < u - u^3 = -\Delta u$ in $\Omega$. 
Multiplying this inequality by $\xi^2/u$ for any $\xi \in C_c^\infty(\Omega)$ and integrating by parts, we find
\[
\int_{\Omega} (1 - 3u^2)\xi^2 \, dx \le \int_{\Omega} \frac{-\Delta u}{u} \xi^2 \, dx = \int_{\Omega} \nabla u \cdot \nabla \left( \frac{\xi^2}{u} \right) \, dx
\le \int_{\Omega} |\nabla \xi|^2 \, dx,
\]
where the identity $\nabla u \cdot \nabla (\frac{\xi^2}{u}) = |\nabla \xi|^2 - |u \nabla (\frac{\xi}{u})|^2$ is used in the last inequality.
\end{proof}

The following result is needed to rule out the trivial solution when we obtain global solutions via a compactness argument.

\begin{proposition}\label{prop:zero_instability}
Let $\Omega \subset \mathbb{R}^n$ be a domain. Suppose that $\Omega$ contains balls of arbitrarily large radius. Then the trivial solution $u \equiv 0$ is unstable in $\Omega$.
\end{proposition}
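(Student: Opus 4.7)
The plan is to test the stability inequality against a smooth cutoff of the first Dirichlet eigenfunction on a large ball contained in $\Omega$.

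First I compute the linearized operator. Since the nonlinearity is $f(u)=u-u^3$ with $f'(0)=1$, the stability functional at $u\equiv 0$ reads
\[
Q(\xi) := \int_{\Omega}\bigl(|\nabla\xi|^2 - \xi^2\bigr)\,dx, \qquad \xi\in C_c^\infty(\Omega),
\]
and it suffices to exhibit a single $\xi\in C_c^\infty(\Omega)$ with $Q(\xi)<0$.

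Next I invoke the hypothesis: for each $R>0$ there exists $x_R\in\mathbb{R}^n$ with $B_R(x_R)\subset\Omega$. Let $\lambda_1(B_R)$ denote the first Dirichlet eigenvalue of $-\Delta$ on $B_R(x_R)$, and $\phi_R$ the corresponding positive eigenfunction. By translation invariance and the scaling $\lambda_1(B_R)=R^{-2}\lambda_1(B_1)$, we can choose $R$ large enough so that $\lambda_1(B_R)<1$. Extending $\phi_R$ by zero gives a function in $H_0^1(\Omega)$ with
\[
\int_{\Omega}|\nabla\phi_R|^2\,dx \;=\; \lambda_1(B_R)\int_{\Omega}\phi_R^2\,dx \;<\; \int_{\Omega}\phi_R^2\,dx,
\]
so $Q(\phi_R)<0$.

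Finally, since $\phi_R\in H_0^1(B_R(x_R))$ can be approximated in $H^1$-norm by functions in $C_c^\infty(B_R(x_R))\subset C_c^\infty(\Omega)$, and $Q$ is continuous on $H_0^1$, we obtain $\xi\in C_c^\infty(\Omega)$ with $Q(\xi)<0$, which contradicts the definition of stability. There is no real obstacle in this argument; the only mild point is to confirm that the first Dirichlet eigenfunction admits a $C_c^\infty$ approximation preserving the strict inequality $Q<0$, which follows from the density of $C_c^\infty(B_R)$ in $H_0^1(B_R)$ together with the continuity of the quadratic form.
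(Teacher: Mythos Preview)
Your proof is correct and follows essentially the same approach as the paper: both use the first Dirichlet eigenfunction on a large ball $B_R\subset\Omega$, invoke the scaling $\lambda_1(B_R)\sim R^{-2}$ to make $\lambda_1(B_R)<1$, and conclude $Q(\phi_R)<0$. Your version is in fact slightly more careful than the paper's, since you explicitly pass from $\phi_R\in H_0^1$ to a $C_c^\infty$ test function by density, whereas the paper leaves this implicit.
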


\begin{proof}
The linearized operator at $u \equiv 0$ corresponds to the quadratic form
\[
Q_0(\xi) := \int_{\Omega} \left( |\nabla \xi|^2 - \xi^2 \right) \, dx.
\]
To prove instability, we must find a test function $\xi$ such that $Q_0(\xi) < 0$.
For any $R > 0$, let $B_R \subset \Omega$ be a ball of radius $R$. Let $\phi_R$ be the first Dirichlet eigenfunction of $-\Delta$ on $B_R$ associated with the eigenvalue $\lambda_1(B_R)$. Extending $\phi_R$ by zero outside $B_R$, we have $\phi_R \in H^1_0(\Omega)$.
A direct computation shows
\[
Q_0(\phi_R) = \int_{B_R} \left( |\nabla \phi_R|^2 - \phi_R^2 \right) \, dx = (\lambda_1(B_R) - 1) \int_{B_R} \phi_R^2 \, dx.
\]
Since $\lambda_1(B_R) \propto R^{-2}$, we can choose $R$ sufficiently large such that $\lambda_1(B_R) < 1$. This implies $Q_0(\phi_R) < 0$, completing the proof.
\end{proof}

\section{Asymptotic behavior of solutions}\label{sec:asymp}

In this section, we establish the asymptotic behavior of solutions defined on a domain $\Omega \subset \mathbb{R}^{n}$ whose boundary $\Gamma = \partial \Omega$ is asymptotic to a minimal cone $\mathcal{C}$. In particular, we consider a solution $u$ that is positive in $\Omega$ and vanishes on $\Gamma$.

Recall that the Simons cone $\mathcal{C}_{m,m}$ is defined by$$\mathcal{C}_{m,m} = \{ (x, y) \in \mathbb{R}^m \times \mathbb{R}^m : |x| = |y| \}.$$Consider an entire solution of \eqref{eq:main} whose nodal set is asymptotic to $\mathcal{C}_{m,m}$. By restricting this solution to one of its nodal domains, where the solution is positive, we can apply the results established in this section. This generalizes the asymptotic analysis in \cite{CT10}[Theorem 1.6], where the authors established the result for entire saddle-shaped solutions whose nodal set is \textit{exactly} the Simons cone.

\begin{lemma}\label{lem:tangent}
Let $\Omega \subset \mathbb{R}^{n}$ be a domain with boundary $\Gamma = \partial \Omega$ asymptotic to a minimal hypercone $\mathcal{C}$ in the $C^1$ sense (see Definition~\ref{def:asymptotic_C1}).
Let $\{x_k\} \subset \Gamma$ be a sequence with $|x_k| \to \infty$ such that the inward unit normal $\nu(x_k)$ converges to $\nu_\infty \in \mathbb{S}^{n-1}$.
Set $H := \{ y \cdot \nu_\infty = 0 \}$ and $H^+ := \{ y \cdot \nu_\infty > 0 \}$.
Then, for any $R > 0$, we have 
\begin{enumerate}
    \item $d_{\mathcal{H}} \big( (\Gamma - x_k) \cap B_{R}, H \cap B_{R} \big) \to 0$,
    \item $d_{\mathcal{H}} \big( (\Omega - x_k) \cap B_{R}, H^+ \cap B_{R} \big) \to 0$,
\end{enumerate}
as $k \to \infty$, where $d_{\mathcal{H}}$ denotes the Hausdorff distance.
\end{lemma}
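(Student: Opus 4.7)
The plan is to exploit the $C^1$ normal-graph representation \eqref{eq:normal_graph} to show that, after translation by $x_k$ and restriction to $B_R$, both $\Gamma$ and $\Omega$ flatten onto the tangent plane at a limit link point $z_\infty$ and its positive half-space. The key idea is that the decay \eqref{eq:C1_decay} controls the discrepancy between $\Gamma$ and $\mathcal{C}$, while the cone $\mathcal{C}$ itself ``looks affine'' on any ball of fixed radius $R$ at points far from the origin.

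First, since $|x_k|\to\infty$, for $k$ large $x_k$ lies in the graphical region, so we may write $x_k = r_k z_k + h(r_k,z_k)\,\nu_{\mathcal{C}}(r_k z_k)$ with $z_k\in S$ and $r_k\to\infty$. By compactness of $S$, we extract a subsequence with $z_k\to z_\infty\in S$. Computing the tangent frame of the graph \eqref{eq:normal_graph} at $x_k$, the tangent vectors differ from those of $\mathcal{C}$ at $r_k z_k$ by quantities of order $\partial_r h$ and $r_k^{-1}\nabla_S h$ (together with a lower-order term in $h$), all of which vanish by \eqref{eq:C1_decay}. Consequently $\nu(x_k) - \nu_{\mathcal{C}}(z_k)\to 0$, and passing to the limit identifies $\nu_\infty = \nu_{\mathcal{C}}(z_\infty)$ and $H = T_{z_\infty}\mathcal{C}$ (the linear tangent plane of the cone along the ray through $z_\infty$).

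Next I would prove the cone-flattening statement
\[
d_{\mathcal{H}}\bigl((\mathcal{C}-x_k)\cap B_R,\ H\cap B_R\bigr)\to 0.
\]
Any $r'z'\in\mathcal{C}$ with $|r'z'-x_k|\le R$ satisfies $r'=r_k+O(1)$ and $z'\to z_\infty$, placing $r'z'-x_k$ within $o(1)$ of $H$. The converse inclusion is obtained by perturbing $r_k z_k$ in the radial direction $z_\infty$ and tangentially along $S$. Now for part (1), any $y\in\Gamma\cap(x_k+B_R)$ has $y = rz + h(r,z)\nu_{\mathcal{C}}(rz)$ with $r\ge |x_k|-R-o(1)\to\infty$, so the normal correction tends to $0$ uniformly and $(\Gamma-x_k)\cap B_R$ inherits the same Hausdorff limit $H\cap B_R$ as $(\mathcal{C}-x_k)\cap B_R$. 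Part (2) then follows from part (1) together with the one-sidedness: $\Omega$ is the $\nu$-side of $\Gamma$, and since $\nu(x_k)\to\nu_\infty$, any $w\in H^+\cap B_R$ with $w\cdot\nu_\infty>0$ lies strictly above the (flattening) graph of $h$ for large $k$, hence in $\Omega-x_k$; boundary points $w\cdot\nu_\infty=0$ are approximated by interior ones, and conversely every translated interior point lies within $o(1)$ of $H^+$.

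A small point worth handling explicitly: the convergence $z_k\to z_\infty$ is only guaranteed along a subsequence (the normal map $\nu_{\mathcal{C}}:S\to\mathbb{S}^{n-1}$ need not be injective, e.g.\ for Simons cone), but the limiting objects $H$ and $H^+$ depend only on $\nu_\infty$, so the subsequential Hausdorff limits coincide and the full sequence converges. The main obstacle I anticipate is the first step, namely quantifying the closeness of $\nu(x_k)$ to $\nu_{\mathcal{C}}(z_k)$; this requires a careful tangent-frame computation on the graph in which the factor of $r_k$ multiplying $\nabla_S$-derivatives is exactly absorbed by the $r^{-1}$ weight in \eqref{eq:C1_decay}. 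Once this identification is secured, all remaining steps are elementary geometric flattening arguments at infinity.
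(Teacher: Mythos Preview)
Your proposal is correct and follows essentially the same strategy as the paper: both arguments decompose via the triangle inequality into (i) $\Gamma-x_k$ is close to the translated cone because $|h|\to 0$, and (ii) the translated cone is close to its tangent hyperplane because its curvatures decay like $1/r_k$, together with the identification $\nu_{\mathcal{C}}(z_k)\to\nu_\infty$ coming from the $C^1$-decay of $h$. The only cosmetic differences are that the paper translates $\mathcal{C}$ by the base point $p_k=r_k z_k$ rather than by $x_k$ (harmless since $|x_k-p_k|=|h|\to 0$) and obtains $\nu_{\mathcal{C}}(p_k)\to\nu_\infty$ via a rescaling $\Gamma\mapsto\Gamma/r_k$ instead of your direct tangent-frame computation; your remark on the subsequence issue is a nice clarification absent from the paper.
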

\begin{proof}
Since (2) follows immediately from (1), we prove (1).

Let $\Gamma_k := \Gamma - x_k$ and $\mathcal{C}_k := \mathcal{C} - p_k$, where $x_k = p_k + h(p_k)\nu_{\mathcal{C}}(p_k)$ as in Definition~\ref{def:asymptotic_C1}. By the triangle inequality for the Hausdorff distance $d_{\mathcal{H}}$, it suffices to show that $\Gamma_k$ converges to $\mathcal{C}_k$, and $\mathcal{C}_k$ converges to $H$ in $B_R(0)$.

First, we compare $\Gamma$ with $\mathcal{C}$. Since $|x_k| \to \infty$, the ball $B_R(x_k)$ is contained in the region where $r \gg 1$. The decay condition \eqref{eq:C1_decay} implies that $|h(r, z)| \to 0$ uniformly as $r \to \infty$. Consequently, $\Gamma$ becomes arbitrarily close to $\mathcal{C}$ within the fixed ball $B_R$ relative to the shifted frames. Thus,
\begin{align}\label{conv:GkCk}
d_{\mathcal{H}} \big( \Gamma_k \cap B_R, \mathcal{C}_k \cap B_R \big) \to 0 \quad \text{as } k \to \infty.    
\end{align}

Next, we compare $\mathcal{C}_k$ with $H$. Since the principal curvatures of $\mathcal{C}$ decay as $O(1/|p_k|)$, the cone becomes asymptotically flat. Therefore, 
\begin{align}\label{conv:CkHk}
d_{\mathcal{H}} \big( \mathcal{C}_k \cap B_R, H_k \cap B_R \big) \to 0 \quad \text{as } k \to \infty,
\end{align}
where $H_k := \{ y \in \mathbb{R}^n : y \cdot \nu_{\mathcal{C}}(p_k) = 0 \}$ is the tangent hyperplane of $\mathcal{C}$ at $p_k$.
Hence it suffices to show that $\nu_{\mathcal{C}}(p_k) \rightarrow \nu_\infty$. 

We scale $\Gamma$ to $\Gamma/r_k$, where $r_k=|p_k|$. Note that the cone $\mathcal{C}$ is invariant with respect to the scaling. $\Gamma/r_k$ is still a normal graph with respect to $\mathcal{C}$ represented by $h_k'(r,z) = h(r_kr,z )/r_k$. Near $(1,p_k/r_k)$, the scaled hypersurface $\Gamma/r_k$ is a graph over a fixed compact region of $\mathcal{C}$. Since $|h_k'|, |Dh_k'| \rightarrow 0$ as $k \rightarrow +\infty$ by \eqref{eq:C1_decay}, $\nu_\mathcal{C}(p_k)\rightarrow \nu_{\infty}$ as $\Gamma/r_k$ is a $C^1$-perturbation of $\mathcal{C}$. Hence
\[
d_{\mathcal{H}} \big( H_k \cap B_R, H \cap B_R \big) \to 0 \quad \text{as } k \to \infty
\]
which, together with \eqref{conv:GkCk} and \eqref{conv:CkHk}, completes the proof.
\end{proof}

We denote by $u_0(z) := \tanh(z/\sqrt{2})$ the 1D heteroclinic solution. We also define the asymptotic profile function $U$ associated with the cone $\mathcal{C}$ by
\[
U(x) := u_0\left( \mathrm{dist}(x, \mathcal{C}) \right).
\]

\begin{theorem}\label{thm:asym}
Let $\Omega \subset \mathbb{R}^{n}$ be a domain whose boundary $\Gamma := \partial \Omega$ is asymptotic to a minimal cone $\mathcal{C}$ at infinity in the $C^1$ sense (see Definition~\ref{def:asymptotic_C1}). Let $u$ be a bounded solution of \eqref{eq:main} in $\Omega$ such that $u = 0$ on $\Gamma = \partial \Omega$ and $u > 0$ in $\Omega$.
Then,
\[
\|u - U\|_{L^{\infty}(\Omega \setminus B_R)} + \|\nabla u - \nabla U\|_{L^{\infty}(\Omega \setminus B_R)} \rightarrow 0 \quad \text{as } R \rightarrow \infty.
\]
\end{theorem}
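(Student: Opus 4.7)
The plan is a contradiction-and-compactness argument. Suppose the conclusion fails: there exist $\epsilon_0 > 0$ and a sequence $x_k \in \Omega$ with $|x_k| \to \infty$ and $|u(x_k) - U(x_k)| + |\nabla u(x_k) - \nabla U(x_k)| \ge \epsilon_0$. Set $d_k := \mathrm{dist}(x_k, \Gamma)$; passing to subsequences, either $d_k \to d_\infty \in [0, \infty)$ (the \emph{boundary case}) or $d_k \to \infty$ (the \emph{interior case}). I treat each case by blow-up.

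In the boundary case, pick the nearest $y_k \in \Gamma$, so that $x_k = y_k + d_k\,\nu(y_k)$ for large $k$ by \Cref{def:asymptotic_C1}, and pass to a further subsequence so $\nu(y_k) \to \nu_\infty$. By \Cref{lem:tangent}, $\Gamma - y_k$ and $\Omega - y_k$ converge in Hausdorff distance on bounded sets to $H := \{z \cdot \nu_\infty = 0\}$ and $H^+$. Setting $v_k(z) := u(y_k + z)$ and flattening the boundary via the $C^1$ graph $h$ from Definition~\ref{def:asymptotic_C1}, standard elliptic theory yields $v_k \to v_\infty$ in $C^2_{\mathrm{loc}}(\overline{H^+})$, with $v_\infty$ a bounded nonnegative solution of \eqref{eq:main} on $H^+$ vanishing on $H$. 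Stability of $v_\infty$ is inherited from the $v_k$ via a cutoff argument based on \Cref{prop:pos_stability}. A classical half-space classification (for instance, the Berestycki--Caffarelli--Nirenberg theorem, or a direct moving-plane argument for stable Allen--Cahn solutions) then forces $v_\infty(z) = u_0(z \cdot \nu_\infty)$. Hence $u(x_k) \to u_0(d_\infty)$ and $\nabla u(x_k) \to u_0'(d_\infty)\nu_\infty$; the decay \eqref{eq:C1_decay} gives $\mathrm{dist}(x_k, \mathcal{C}) \to d_\infty$ with matching normal direction, so $U(x_k) \to u_0(d_\infty)$ and $\nabla U(x_k) \to u_0'(d_\infty)\nu_\infty$ as well, contradicting $\epsilon_0 > 0$.

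In the interior case, set $u_k(y) := u(x_k + y)$; since $B_{d_k}(0) \subset \Omega - x_k$ with $d_k \to \infty$, subsequential convergence yields $u_k \to u_\infty$ in $C^2_{\mathrm{loc}}(\mathbb{R}^n)$, with $u_\infty$ a bounded entire stable solution satisfying $0 \le u_\infty \le 1$. Once we show $u_\infty \equiv 1$, we get $u(x_k) \to 1$ and $\nabla u(x_k) \to 0$; the $C^1$ asymptotic also gives $\mathrm{dist}(x_k, \mathcal{C}) \to \infty$, so $U(x_k) \to 1$ and $\nabla U(x_k) \to 0$, again contradicting $\epsilon_0 > 0$. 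Applying \Cref{lem:strong_mp} to the ordered pair $(0, u_\infty)$ gives either $u_\infty \equiv 0$ (excluded by \Cref{prop:zero_instability}) or $u_\infty > 0$ everywhere. To eliminate the strictly intermediate case $0 < u_\infty < 1$, I would use the boundary case (already established) to conclude $u \ge u_0(M) - o(1)$ along $\{\mathrm{dist}(\cdot, \Gamma) = M\}$ for $M$ large, then run a sweeping comparison with $u_0(\mathrm{dist}(\cdot, \mathcal{C}))$, which is an approximate supersolution far from $\mathcal{C}$ thanks to the vanishing mean curvature, to force $u \to 1$ uniformly on $\{\mathrm{dist}(\cdot, \Gamma) \ge M\}$; this would imply $u_\infty(0) = 1$, hence $u_\infty \equiv 1$ by \Cref{lem:strong_mp}.

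The hard part is the identification $u_\infty \equiv 1$ in the interior case, since in the high-dimensional setting relevant to the paper, non-constant bounded positive stable entire solutions of \eqref{eq:main} are not excluded by stability alone. What makes the argument go through is the geometric input: the minimizing property of $\mathcal{C}$, combined with the $C^1$ asymptotic of $\Gamma$ to $\mathcal{C}$, makes $u_0(\mathrm{dist}(\cdot, \mathcal{C}))$ an approximate super-/subsolution near infinity, which allows the sweeping comparison based on the boundary case to close. A secondary technical obstacle is that the compactness is carried out on unbounded domains converging only in the Hausdorff sense, so stability and up-to-boundary $C^2$ regularity must be passed through the limit using cutoff arguments together with the $C^1$ boundary straightening supplied by \Cref{def:asymptotic_C1}.
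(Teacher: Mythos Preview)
Your contradiction-and-compactness skeleton matches the paper's, and your boundary case is essentially the paper's argument (the paper cites Angenent's half-space rigidity rather than BCN, and explicitly uses inherited stability together with \Cref{prop:zero_instability} to rule out $v_\infty\equiv 0$ \emph{before} invoking the one-dimensional classification --- you have the ingredient but should spell out that step).

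The real divergence is in the interior case, where you have misidentified the difficulty. You worry that ``non-constant bounded positive stable entire solutions of \eqref{eq:main} are not excluded by stability alone'' and therefore propose a sweeping comparison built on the boundary case and on $u_0(\mathrm{dist}(\cdot,\mathcal{C}))$ as an approximate barrier. But the limit $u_\infty$ satisfies $0\le u_\infty\le 1$, and on $[0,1]$ the nonlinearity $f(s)=s-s^3$ is nonnegative with $f(0)=f(1)=0$ and $f>0$ on $(0,1)$. This puts you in the monostable/KPP regime, where a classical Liouville theorem (the paper invokes \cite[Proposition~1.14]{BHN05}) forces any bounded entire solution with range in $[0,1]$ to be identically $0$ or $1$, in every dimension, with no stability hypothesis needed. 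Stability plus \Cref{prop:zero_instability} is then used only to discard the constant $0$. So the step you flagged as ``the hard part'' is in fact a one-line citation, and neither the minimizing property of $\mathcal{C}$ nor a sweeping argument enters the interior case at all. Your proposed route might be made to work, but it is substantially more involved than necessary and introduces geometric hypotheses the paper does not use at this point.
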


\begin{proof}
We proceed by contradiction. Suppose that the assertion of the theorem fails. Then, there exist a constant $\varepsilon > 0$ and a sequence of points $\{x_k\} \subset \Omega$ such that $|x_k| \to \infty$ and
\begin{equation}\label{contra}
|u(x_k) - U(x_k)| + |\nabla u(x_k) - \nabla U(x_k)| \ge \varepsilon.
\end{equation}
Since $u$ is bounded and thus $|u|\le 1$ by the maximum principle, standard elliptic estimates imply that $\nabla u$ is also uniformly bounded in $\Omega$ (c.f. \cite{Modica85}).

We distinguish two cases.

\vspace{1em}
\noindent \textbf{Case 1.} The sequence $d_k := \mathrm{dist}(x_k, \Gamma)$ is unbounded.

Up to a subsequence, assume $d_k \to \infty$. Since $\Gamma$ is asymptotic to $\mathcal{C}$, the distance from $x_k$ to the cone $\mathcal{C}$ also tends to infinity. In the domain $\Omega$, this implies that $U(x_k) \to 1$ and $\nabla U(x_k) \to 0$.
Consequently, \eqref{contra} yields
\begin{equation}\label{case1}
|u(x_k) - 1| + |\nabla u(x_k)| \ge \frac{\varepsilon}{2}
\end{equation}
for sufficiently large $k$.

Let $v_k(y) := u(y + x_k)$. Since $d_k = \mathrm{dist}(x_k, \Gamma) \to \infty$, for any fixed $R > 0$, the ball $B_R(x_k)$ is strictly contained in $\Omega$ for all sufficiently large $k$.
Thus, the sequence $v_k$ is well-defined on balls expanding to the whole space $\mathbb{R}^{n}$. By standard elliptic estimates, up to a subsequence, $v_k$ converges in $C^2_{\text{loc}}(\mathbb{R}^{n})$ to a limit function $v$. Hence, $v$ is an entire solution satisfying
\begin{equation}\label{solpos}
\begin{cases}
-\Delta v = v - v^3 & \text{in } \mathbb{R}^{n}, \\
0 \le v \le 1 & \text{in } \mathbb{R}^{n}.
\end{cases}
\end{equation}

Applying Liouville-type results (e.g., \cite[Proposition 1.14]{BHN05}), $v$ must be identically constant, i.e., $v \equiv 0$ or $v \equiv 1$.
If $v \equiv 1$, then $u(x_k) \to 1$ and $\nabla u(x_k) \to 0$, which contradicts \eqref{case1}. 
This leaves only the case $v \equiv 0$. However, since stability is preserved under local convergence, it follows from \Cref{prop:pos_stability} that the limit $v$ must be a stable solution in $\mathbb{R}^{n}$.
This contradicts Proposition~\ref{prop:zero_instability}, which asserts that the trivial solution is unstable in any domain containing arbitrarily large balls (such as $\mathbb{R}^{n}$).
Thus, Case 1 cannot occur.

\vspace{1em}
\noindent \textbf{Case 2.} The sequence $d_k := \mathrm{dist}(x_k, \Gamma)$ is bounded.

Up to a subsequence, assume $d_k \to d \ge 0$. Let $x_k^0 \in \Gamma$ be a point realizing the distance, i.e., $|x_k - x_k^0| = d_k$. Since $x_k \in \Omega$, we can write
\[
x_k = x_k^0 + d_k \nu(x_k^0),
\]
where $\nu(x_k^0)$ denotes the unit inward normal to $\Gamma$ at $x_k^0$.
Since $\{\nu(x_k^0)\}$ lies on the unit sphere, up to a subsequence, there exists a unit vector $\nu \in \mathbb{S}^{n-1}$ such that $\nu(x_k^0) \to \nu$ as $k \to \infty$.

Let $w_k(y) := u(y + x_k^0)$ defined on the shifted domains $\Omega_k := \Omega - x_k^0$.
Since $\Gamma$ is asymptotic to the minimal cone at infinity, the sequence of domains $\Omega_k$ converges locally to a half-space $H^+$.
Standard elliptic estimates imply that $w_k$ converges locally in $C^2(H^+)$ to a limit $w$ defined in $H^+$.

To determine the boundary condition on $H = \partial H^+$, consider any point $y \in H^+$.
Using the mean value theorem and the uniform boundedness of $|\nabla u|$, we have the estimate
\[
|w_k(y)| = |u(y + x_k^0)| \le \|\nabla u\|_{L^\infty} \mathrm{dist}(y + x_k^0, \Gamma) = \|\nabla u\|_{L^\infty} \mathrm{dist}(y, \partial \Omega_k).
\]
Passing to the limit $k \to \infty$, since $\partial \Omega_k \to H$ locally uniformly, we obtain
\[
|w(y)| \le C \, \mathrm{dist}(y, H) \quad \text{for all } y \in H^+,
\]
where $C$ is a universal constant.
This inequality implies that $w(y) \to 0$ as $\mathrm{dist}(y, H) \to 0$. Thus, $w$ extends continuously to the boundary $H$ with
\[
w(y) = 0 \quad \text{for } y \in H.
\]

Combined with the positivity of $u$ in $\Omega$, the limit $w$ solves the Dirichlet problem in the half-space:
\begin{equation}\label{claimH}
\begin{cases}
-\Delta w = w - w^3 & \text{in } H^+, \\
w \ge 0 & \text{in } H^+, \\
w = 0 & \text{on } H.
\end{cases}
\end{equation}
By \Cref{prop:pos_stability}, $u$ is stable in $\Omega$, and thus the limit $w$ is a stable solution in $H^+$.
According to \Cref{prop:zero_instability}, the trivial solution is unstable, which implies $w \not\equiv 0$.
By the strong maximum principle, $w$ is strictly positive. Since $w$ is a bounded solution in a half-space, we apply the rigidity result of Angenent \cite{Angenent85} to deduce that $w$ is the unique 1D profile depending only on the distance to the boundary.
Thus,
\[
w(y) = u_0(y \cdot \nu) \quad \text{for all } y \in H^+,
\]
where $\nu$ is the inner unit normal to $H$.

We now derive a contradiction to \eqref{contra}. Recall that $x_k = x_k^0 + d_k \nu_k^0$ with $\nu_k^0 \to \nu$. Using the local uniform convergence $w_k \to w$, we estimate $u(x_k)$:
\[
u(x_k) = w_k(d_k \nu_k^0) = w(d_k \nu) + o(1) = u_0(d_k) + o(1).
\]
Since $u_0(d_k) = U(x_k) + o(1)$, we have $u(x_k) = U(x_k) + o(1)$.
Similarly, $\nabla u(x_k) = \nabla U(x_k) + o(1)$.
This implies $|u(x_k) - U(x_k)| + |\nabla u(x_k) - \nabla U(x_k)| \to 0$, contradicting \eqref{contra}.
\end{proof}

\section{Proof of Theorem~\ref{thm:main-order}}\label{sec:comparison}
In this section, we present the proof of \Cref{thm:main-order}. To do so, we first establish a maximum principle on unbounded domains and prove the existence of minimal solutions. These results generalize Proposition 1.3 and Lemma 3.1 in \cite{Cabre12} for saddle-shaped solutions to our setting.

\begin{lemma}\label{lem:max-principle-unbounded}
Let $\mathcal{O} \subset \mathbb{R}^n$ be a domain whose boundary $\Gamma = \partial\mathcal{O}$ is asymptotic to a minimal cone $\mathcal{C}$ in the $C^1$ sense (see Definition~\ref{def:asymptotic_C1}).
Let $u$ be an entire solution of \eqref{eq:main} with positive phase $\mathcal{O}$, i.e., $\mathcal{O} = \{ x \in \mathbb{R}^n : u(x) > 0 \}$.
Suppose that $v \in C^2(\mathcal{O}) \cap C^0(\overline{\mathcal{O}})$ is a subsolution of the linearized operator at $u$, satisfying
\[
\Delta v + (1 - 3u^2)v \ge 0 \quad \text{in } \mathcal{O}, \qquad v \le 0 \quad \text{on } \Gamma,
\]
and
\[
\limsup_{\substack{x \in \mathcal{O} \\ |x| \to \infty}} v(x) \le 0.
\]
Then $v \le 0$ in $\mathcal{O}$.
\end{lemma}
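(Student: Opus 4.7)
The plan is to exploit the fact that $u$ itself is a positive strict supersolution of the linearized operator $L := \Delta + (1 - 3u^2)$ on $\mathcal{O}$: since $u$ solves the Allen--Cahn equation, a direct computation gives $Lu = \Delta u + (1-3u^2)u = -2u^3 < 0$ in $\mathcal{O}$. I therefore introduce the ratio $\phi := v/u$, which is $C^2$ in $\mathcal{O}$ since $u>0$ there. Using the identity $u^2 \nabla \phi = u\nabla v - v\nabla u$, one computes
\[
\operatorname{div}(u^2 \nabla \phi) = u\,\Delta v - v\,\Delta u = u\,Lv - v\,Lu \geq 2 u^4 \phi \quad \text{in } \mathcal{O}.
\]
The resulting divergence-form inequality has nonpositive zeroth-order coefficient $-2u^2$, so the task reduces to proving $\phi^+ \equiv 0$ in $\mathcal{O}$, which is equivalent to $v \leq 0$.

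The next step is an energy estimate. Fix a smooth cutoff $\zeta_R$ equal to $1$ on $B_R$, supported in $B_{2R}$, with $|\nabla \zeta_R| \leq C/R$, and test the above inequality against $\phi^+ \zeta_R^2$. Integration by parts produces a boundary flux $B_R$ along $\Gamma \cap B_{2R}$, and a standard Cauchy--Schwarz absorption of the gradient cross term cancels the $u^2 |\nabla \phi^+|^2$ contribution, yielding
\[
2 \int_{\mathcal{O}} u^4 (\phi^+)^2 \zeta_R^2 \, dx \leq \int_{\mathcal{O}} u^2 (\phi^+)^2 |\nabla \zeta_R|^2 \, dx + B_R.
\]
Since $u^2 (\phi^+)^2 = (v^+)^2$, the first right-hand term is dominated by $C R^{-2}$ times an annular integral of $(v^+)^2$. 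The hypothesis $\limsup v \leq 0$ at infinity gives $v^+ \to 0$ pointwise; to upgrade this to a quantitative bound that beats the $R^{n-2}$ growth of the annular volume, I would split into regimes: in the interior region $\{u \geq 3/4\}$ the coefficient $1-3u^2 \leq -\tfrac{11}{16}$ makes $L$ uniformly coercive, so a Phragmen--Lindel\"of-type argument forces exponential decay of any nonnegative $L$-subsolution; in the transition layer near $\Gamma$ at infinity the asymptotic profile $U$ from Theorem~\ref{thm:asym} provides the required uniform control on both $u$ and $v$.

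The principal obstacle is controlling the boundary flux $B_R$ along $\Gamma$. Using $u^2 \nabla \phi = u \nabla v - v \nabla u$ together with $u|_\Gamma = 0$, the boundary integrand rewrites in the trace sense as $-(v^+)^2 \partial_\nu u/u$; on the smooth part of $\Gamma$, the asymptotic profile and standard boundary regularity give $v^+ \leq C\,\mathrm{dist}(\cdot,\Gamma)$ (using $v|_\Gamma \leq 0$ and $v \in C^0(\overline{\mathcal{O}})$) while $\partial_\nu u/u = O(1/\mathrm{dist}(\cdot,\Gamma))$, so the integrand vanishes pointwise and the smooth contribution is zero. The genuinely difficult piece is the singular locus $\{u=0\}\cap\{|\nabla u|=0\}$, which is compact (outside the set $K$ of Definition~\ref{def:asymptotic_C1} the nodal set is a smooth normal graph) but where these pointwise bounds fail. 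My strategy is to excise an $\varepsilon$-neighborhood of this singular set and exploit that it has Hausdorff dimension at most $n-2$ (by unique continuation for the Allen--Cahn equation), so it carries zero Newtonian capacity in $\mathbb{R}^n$ and the excised flux vanishes as $\varepsilon \to 0$. This refined analysis near $\{u=0\}\cap\{|\nabla u|=0\}$ is exactly the technical point highlighted in the introduction and is the step I expect to be most delicate. Once $B_R \to 0$ and the annular term also vanishes, passing $R \to \infty$ in the energy inequality gives $u^4 (\phi^+)^2 \equiv 0$ in $\mathcal{O}$, and since $u > 0$ there we conclude $\phi^+ \equiv 0$, i.e., $v \leq 0$.
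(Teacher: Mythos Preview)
Your approach shares the paper's key observation (that $u$ is a positive strict supersolution of $L$, so work with the ratio $v/u$) but then diverges into an energy/integration-by-parts argument, whereas the paper uses a pointwise maximum principle. The energy route, as sketched, has two genuine gaps. First, the boundary flux: you assert $v^+ \leq C\,\mathrm{dist}(\cdot,\Gamma)$ on the smooth part of $\Gamma$ via ``standard boundary regularity'', but the hypothesis is only $v \in C^2(\mathcal{O})\cap C^0(\overline{\mathcal{O}})$ and $v$ is merely a \emph{subsolution} of $L$ --- no Lipschitz-at-the-boundary result is available under these assumptions, $\nabla v$ may blow up near $\Gamma$, and hence $\phi^+ = v^+/u$ need not be bounded. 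The integration by parts producing $B_R$ is therefore not justified, and your formal rewriting of the boundary integrand as $-(v^+)^2\,\partial_\nu u/u$ already presupposes the very cancellation that needs proof. Second, the annular term: you need $R^{-2}\int_{(B_{2R}\setminus B_R)\cap\mathcal{O}}(v^+)^2 \to 0$, but the hypothesis gives only $v^+ = o(1)$ at infinity, yielding $o(1)\cdot R^{n-2}$, which diverges for $n\geq 3$. Your Phragm\'en--Lindel\"of patch in $\{u\geq 3/4\}$ would at best control $v^+$ away from the transition layer; in the layer itself you invoke Theorem~\ref{thm:asym}, but that result concerns $u$, not $v$, and supplies no decay rate for $v^+$ there.

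The paper sidesteps both issues by never integrating by parts. It restricts $w = v/u$ to the interior set $\mathcal{O}_\varepsilon = \{\mathrm{dist}(\cdot,\Gamma)>\varepsilon\}$, where $u\geq\kappa_\varepsilon>0$ by the asymptotic profile, so $w$ is bounded and its positive supremum $\alpha$ is attained at some interior point $x_1$. On the boundary strip $N_\varepsilon$ it proves $v-\alpha u\leq 0$ via a measure-theoretic maximum principle in the spirit of Berestycki--Nirenberg--Varadhan, using that $\mathcal{O}^c$ has uniformly positive density along $\Gamma$; this is where real analyticity of $u$ and bounded vanishing order enter, in place of your capacity argument on the singular set. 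A contradiction then follows directly from the elliptic inequality $\Delta w + 2\nabla(\log u)\cdot\nabla w - 2u^2 w \geq 0$ evaluated at the interior maximum $x_1$. No boundary flux, no quantitative decay rate for $v$, and no regularity of $v$ beyond $C^0(\overline{\mathcal{O}})$ are required.
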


\begin{proof}
Suppose for contradiction that $v(x_0) > 0$ at some $x_0 \in \mathcal{O}$.
Let $t(x) := \mathrm{dist}(x, \Gamma)$ be the distance function to the boundary.
Fix $\varepsilon \in (0, t(x_0)]$ (to be determined later).
We partition the domain into a boundary strip $N_\varepsilon$ and an interior set $\mathcal{O}_\varepsilon$:
\[
N_\varepsilon = \{ x \in \mathcal{O} : 0 < t(x) < \varepsilon \}, \qquad
\mathcal{O}_\varepsilon = \{ x \in \mathcal{O} : t(x) > \varepsilon \}.
\]

By the one-dimensional heteroclinic asymptotics (\Cref{thm:asym}), there exists $\kappa_\varepsilon > 0$ such that
\[
u(x) \ge \kappa_\varepsilon \quad \text{for all } x \in \mathcal{O}_\varepsilon.
\]
Define $w := v/u$ on $\mathcal{O}_\varepsilon$ and set $\alpha := \sup_{\overline{\mathcal{O}_\varepsilon}} w$.
Since $u \ge \kappa_\varepsilon > 0$ on $\mathcal{O}_\varepsilon$, the hypotheses $v \le 0$ on $\Gamma$ and $\limsup_{|x| \to \infty} v(x) \le 0$ imply $\limsup_{|x| \to \infty} w(x) \le 0$. Since $v(x_0) > 0$, we have $\alpha > 0$. Moreover, the supremum is attained at some interior point $x_1 \in \mathcal{O}_\varepsilon$, i.e., $w(x_1) = \alpha$.

Consider the function $\phi := v - \alpha u$. We claim that $\phi \le 0$ in $N_\varepsilon$.
First, observe that $L_u \phi = L_u v - \alpha L_u u$. Since $L_u v \ge 0$ by assumption and
\[
L_u u = \Delta u + (1-3u^2)u = -(u - u^3) + (1-3u^2)u = -2u^3 < 0,
\]
we have $L_u \phi \ge 2\alpha u^3 > 0$ in $N_\varepsilon$.
The boundary conditions are:
\begin{itemize}
    \item On $\Gamma$, $v \le 0$ and $u = 0$, so $\phi \le 0$.
    \item On $\partial N_\varepsilon \cap \mathcal{O} = \partial \mathcal{O}_\varepsilon$, $v/u \le \alpha$ implies $\phi \le 0$.
\end{itemize}
 
Although the region $N_\varepsilon$ may not be a `narrow domain' due to the irregularity of the boundary $\Gamma$, the maximum principle for $L_u$ remains valid provided that the complement of the domain satisfies a uniform density estimate (see Berestycki, Nirenberg, and Varadhan \cite[Theorem 2.5]{BNV94}).

Suppose for contradiction that $\sup_{N_\varepsilon} \phi > 0$.
Given the boundary condition $\phi \le 0$ on $\partial N_\varepsilon \cap \mathcal{O}$ and the decay condition at infinity, there exists a maximum point $x_0 \in N_\varepsilon$ such that
\[
    \phi(x_0) = \sup_{N_\varepsilon} \phi > 0.
\]
We construct a comparison function $\psi(x) := a(\varepsilon^2 - |x-x_0|^2)$ with a constant $a > 0$ to be determined.
Consider the set $D := N_\varepsilon \cap \{ \phi > 0 \}$. In this region, since $\Delta \phi \ge -(1-3u^2)\phi$ and $1-3u^2 \le 1$, we have
\[
    \Delta(\phi - \psi) = \Delta \phi + 2na \ge -(1-3u^2)\phi + 2na \ge -\phi(x_0) + 2na.
\]
Choosing $a := \phi(x_0)/2n$, we have $\Delta(\phi - \psi) \ge 0$ in $D \cap B_{2\varepsilon}(x_0)$.
Since $\phi - \psi \le \phi \le 0$ on $\partial D \cap B_{2\varepsilon}(x_0)$, we extend the function $(\phi - \psi)_+$ to be zero on $B_{2\varepsilon}(x_0) \setminus D$. This extension is continuous and subharmonic in the viscosity sense in $B_{2\varepsilon}(x_0)$.

Applying the mean value property for subharmonic functions on $B_{2\varepsilon}(x_0)$, we obtain
\[
    \phi(x_0) - a\varepsilon^2 = (\phi - \psi)(x_0) \le \frac{1}{|B_{2\varepsilon}(x_0)|} \int_{B_{2\varepsilon}(x_0)} (\phi - \psi)_+ \, dx.
\]
Since $(\phi - \psi)_+ \le \phi \le \phi(x_0)$ and the support is contained in $D \cap B_{2\varepsilon}(x_0)$,
\[
    \phi(x_0) \left( 1 - \frac{\varepsilon^2}{2n} \right) \le \frac{|D \cap B_{2\varepsilon}(x_0)|}{|B_{2\varepsilon}(x_0)|} \phi(x_0).
\]
Dividing by $\phi(x_0) > 0$ and rearranging yield a bound for the density of the complement set:
\[
    \frac{|B_{2\varepsilon}(x_0) \setminus D|}{|B_{2\varepsilon}(x_0)|} \le \frac{\varepsilon^2}{2n}.
\]
Since $x_0 \in N_\varepsilon$, there exists a boundary point $\bar{x}_0 \in \Gamma$ such that $|x_0 - \bar{x}_0| < \varepsilon$. Observing that $B_\varepsilon(\bar{x}_0) \setminus N_\varepsilon \subset B_{2\varepsilon}(x_0) \setminus D$ (since $\phi \le 0$ on $\Gamma$ and outside $\mathcal{O}$), we deduce
\[
    \frac{|B_\varepsilon(\bar{x}_0) \setminus N_\varepsilon|}{|B_{2\varepsilon}(x_0)|} \le \frac{\varepsilon^2}{2n}.
\]

To derive a contradiction, we rely on the geometric properties of the nodal set $\Gamma = \{u=0\}$.
Recall that the solution $u$ to the Allen--Cahn equation is real analytic. The nodal set $\Gamma$ decomposes into a regular part $\mathcal{R} = \{ x \in \Gamma : |\nabla u(x)| \neq 0 \}$ and a singular part $\mathcal{S} = \{ x \in \Gamma : \nabla u(x) = 0 \}$.

We establish a uniform density estimate for the complement of the domain, $\mathcal{O}^c$.
By \Cref{thm:asym}, there exists a large radius $R_0 > 0$ such that for $|x| > R_0$ such that $\Gamma\setminus \overline B_R \subset \mathcal{R}$. Thus the uniform density estimate holds: there exist $\varepsilon_1, c_1 > 0$ such that
\[
\frac{|B_\rho(x) \setminus \mathcal{O}|}{|B_\rho(x)|} \ge c_1 \quad \text{for all } x \in \Gamma \cap \{|x| > R_0\} \text{ and } \rho < \varepsilon_1.
\]
Next, consider the compact part $K := \Gamma \cap \overline{B_{R_0}}$.
For any $x \in K$, let $d(x)$ denote the vanishing order of $u$ at $x$.
If $x \in \mathcal{R}$, then $d(x)=1$, and $\Gamma$ is locally a $C^1$ hypersurface, implying $\lim_{\rho \to 0} |B_\rho(x) \setminus \mathcal{O}|/|B_\rho(x)| = 1/2$.
If $x \in \mathcal{S}$, we claim that the vanishing order satisfies $\sup_{x \in \mathcal{S} \cap K} d(x) \le C < \infty$. Indeed, if not, there would exist a sequence $x_k \in \mathcal{S} \cap K$ such that $d(x_k) \to \infty$. Up to a subsequence, $x_k \to x_* \in K$. By the smoothness of $u$, this would imply $u$ vanishes to infinite order at $x_*$. However, by the unique continuation principle for analytic functions, $u$ must be identically zero, which contradicts $u > 0$ in $\mathcal{O}$.
Since the vanishing order is bounded, it follows from \cite{Han94} that the local geometry of $\Gamma$ at any singular point is governed by homogeneous harmonic polynomials of finite degree, i.e., $u(x)=P(x)+o(x^d)$, where $P$ is a nonzero homogeneous polynomial with degree $d$. This gives that the complement $\mathcal{O}^c$ has positive density at each point. Specifically,
\[
\liminf_{\rho \to 0} \frac{|B_\rho(x) \setminus \mathcal{O}|}{|B_\rho(x)|} > 0 \quad \text{for all } x \in K.
\]
Hence, there exist uniform constants $\varepsilon_2, c_2 > 0$ such that 
\[
\frac{|B_\rho(x) \setminus \mathcal{O}|}{|B_\rho(x)|} \ge c_2 \quad \text{for all } x \in K \text{ and } \rho < \varepsilon_2.
\]
Indeed, if there exist a sequence of points $x_k \in K$ and radii $\rho_k \searrow 0$ such that
\[
\frac{|B_{\rho_k}(x_k) \setminus \mathcal{O}|}{|B_{\rho_k}(x_k)|} < \frac{1}{k},
\]
then since $x_k$ converges to some $x_0 \in K$ up to a subsequence, the lower density of $\mathcal{O}^c$ at $x_0$ is zero, which contradicts the strictly positive pointwise density assumption.

Combining the estimates for the compact part and the asymptotic tail, we conclude that there exists a universal constant $c > 0$ such that
\[
\frac{|B_{\varepsilon}(\bar{x}_0) \setminus \mathcal{O}|}{|B_{\varepsilon}(\bar{x}_0)|} \ge c \quad \text{for all } \bar{x}_0 \in \Gamma.
\]
Recall the contradiction inequality derived earlier:
\[
\frac{|B_\varepsilon(\bar{x}_0) \setminus N_\varepsilon|}{|B_{2\varepsilon}(x_0)|} \le \frac{\varepsilon^2}{2n}.
\]
Since $B_\varepsilon(\bar{x}_0) \setminus \mathcal{O} \subset B_\varepsilon(\bar{x}_0) \setminus N_\varepsilon$ and $|B_{2\varepsilon}(x_0)| = 2^n |B_\varepsilon(\bar{x}_0)|$, we have
\[
\frac{c}{2^n} \le \frac{|B_\varepsilon(\bar{x}_0) \setminus \mathcal{O}|}{|B_{2\varepsilon}(x_0)|} \le \frac{|B_\varepsilon(\bar{x}_0) \setminus N_\varepsilon|}{|B_{2\varepsilon}(x_0)|} \le \frac{\varepsilon^2}{2n}.
\]
This inequality $c/2^n \le \varepsilon^2/2n$ leads to a contradiction if we choose $\varepsilon$ sufficiently small (specifically, $\varepsilon < \sqrt{2n c / 2^n}$). Thus, the assumption $\sup_{N_\varepsilon} \phi > 0$ is false, completing the proof of $\phi\le 0$ on $N_\varepsilon$.

Combining this with the definition of $\alpha$ on $\mathcal O_{\varepsilon}$, we conclude that
$$
w=\frac{v}{u}\le \alpha \ \text{ in } \mathcal O, \qquad \text{and} \qquad w(x_1)=\alpha.
$$
In particular, $w$ attains an interior maximum in $\mathcal O$.

Next we compute the equation satisfied by $w$ in $\mathcal O$. Writing $v=uw$ and using $\Delta(uw)=u\,\Delta w+2\nabla u\cdot\nabla w+w\,\Delta u$, we obtain
$$
\begin{aligned}
0\le L_u(uw)
&= \Delta(uw)+(1-3u^2)uw \\
&= u\Big(\Delta w+2\,\frac{\nabla u}{u}\cdot\nabla w + w\Big(\frac{\Delta u}{u}+1-3u^2\Big)\Big).
\end{aligned}
$$
Since $u$ solves $\Delta u+u-u^3=0$, the bracket becomes
$$
\Delta w + 2\,\nabla(\log u)\cdot\nabla w - 2u^2\,w \;\ge\; 0\quad\text{in }\mathcal O .
$$
At the interior maximum point $x_1$ we have $\nabla w(x_1)=0$ and $\Delta w(x_1)\le 0$, hence
$$
0 \le \Delta w(x_1) - 2u(x_1)^2\,w(x_1) \le -\,2u(x_1)^2\,\alpha < 0,
$$
a contradiction. Therefore $v\le 0$ in $\mathcal O$.
\end{proof}

\begin{lemma}\label{lem:minsol}
Let $\mathcal{O} \subset \mathbb{R}^{n}$ be an open set whose boundary $\Gamma = \partial \mathcal{O}$ is asymptotic to a minimal cone $\mathcal{C}$ in the $C^1$ sense.
Suppose that $u_1, u_2 \in C^2(\mathcal{O}) \cap C^0(\overline{\mathcal{O}})$ are solutions of \eqref{eq:main} in $\mathcal{O}$ which are positive in $\mathcal{O}$. If one of $u_1, u_2$ vanishes on $\partial\mathcal{O} = \Gamma$,
then there exists a positive solution $u_0 \in C^2(\mathcal{O}) \cap C^0(\overline{\mathcal{O}})$ of \eqref{eq:main} such that
\[
0 < u_0 \le \min\{u_1, u_2\} \quad \text{in } \mathcal{O}, \qquad \text{and} \quad u_0 = 0 \quad \text{on } \Gamma.
\]
\end{lemma}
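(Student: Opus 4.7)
The plan is to construct $u_0$ along an exhaustion of $\mathcal{O}$ by bounded subdomains, as a limit of solutions trapped between a small positive subsolution and the natural supersolution $\psi := \min\{u_1, u_2\}$. The function $\psi$ is a viscosity supersolution of \eqref{eq:main} as the pointwise minimum of two classical solutions of a semilinear equation; and since at least one of $u_1, u_2$ vanishes on $\Gamma$, one has $\psi \in C^0(\overline{\mathcal{O}})$, $\psi > 0$ in $\mathcal{O}$, and $\psi = 0$ on $\Gamma$.

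To build a positive subsolution, I would observe that since $\Gamma$ is asymptotic to a regular minimal hypercone $\mathcal{C}$, the domain $\mathcal{O}$ contains Euclidean balls of arbitrarily large radius: the complement of $\mathcal{C}$ contains an unbounded open cone on the side of $\mathcal{O}$, and $\mathcal{O}$ is a $C^1$ perturbation of this set at infinity. Fix a ball $B \Subset \mathcal{O}$ with first Dirichlet eigenvalue $\lambda_1(B) < 1$, and let $\phi_1 > 0$ be the corresponding normalized eigenfunction, extended by zero to $\mathcal{O}$. For $\epsilon > 0$ sufficiently small, $\underline{u} := \epsilon\phi_1$ is a strict viscosity subsolution of \eqref{eq:main} in $\mathcal{O}$ and satisfies $0 \le \underline{u} \le \psi$ pointwise.

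Next, take an exhaustion $\mathcal{O}_k := \mathcal{O} \cap B_{R_k}$ with $R_k \nearrow \infty$ and $\overline{B} \subset \mathcal{O}_1$. On each bounded set $\mathcal{O}_k$ I would apply the sub/supersolution method, or equivalently minimize the Allen--Cahn energy
\[
E(v) = \int_{\mathcal{O}_k} \Bigl( \tfrac{1}{2}|\nabla v|^2 + \tfrac{1}{4}(1-v^2)^2 \Bigr)\, dx
\]
over the convex set $\{v \in H^1_0(\mathcal{O}_k) : \underline{u} \le v \le \psi\}$, to produce a function $v_k \in C^2(\mathcal{O}_k) \cap C^0(\overline{\mathcal{O}_k})$ solving \eqref{eq:main} and satisfying $\underline{u} \le v_k \le \psi$ in $\mathcal{O}_k$ with $v_k = 0$ on $\partial\mathcal{O}_k$. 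That $v_k$ is a genuine solution, rather than merely a solution of a variational inequality, follows from both obstacles being inactive at the minimizer --- a consequence of $\underline{u}$ being a strict subsolution and $\psi$ a supersolution.

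Finally, by the uniform bound $|v_k| \le 1$ and interior elliptic regularity, a subsequence of $\{v_k\}$ converges in $C^2_{\mathrm{loc}}(\mathcal{O})$ to a limit $u_0$ that solves \eqref{eq:main} with $0 \le u_0 \le \min\{u_1, u_2\}$ in $\mathcal{O}$ and $u_0 \ge \epsilon\phi_1$ on $B$. The squeeze $0 \le u_0 \le u_i$ for whichever $u_i$ vanishes on $\Gamma$ delivers $u_0 \in C^0(\overline{\mathcal{O}})$ with $u_0 = 0$ on $\Gamma$; and since $u_0$ is nonnegative with $u_0 > 0$ on $B \subset \mathcal{O}$, the strong maximum principle applied to $-\Delta u_0 + (u_0^2 - 1) u_0 = 0$ yields $u_0 > 0$ throughout the connected domain $\mathcal{O}$. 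The main technical point is to certify that the two obstacles in the constrained minimization do not enter in contact with $v_k$ on an open set; once this is in place, the passage to the limit and the verification of all properties of $u_0$ proceed by standard squeezing and regularity arguments.
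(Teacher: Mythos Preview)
Your argument is correct and follows the same overall architecture as the paper's: set $\psi=\min\{u_1,u_2\}$ as supersolution, solve on an exhaustion $\mathcal{O}\cap B_R$, and pass to the limit with interior estimates and the squeeze $0\le u_0\le\psi$ to recover continuity up to $\Gamma$. The one substantive difference lies in how positivity of the limit is secured. You build an explicit positive subsolution $\underline{u}=\epsilon\phi_1$ supported in a large ball $B\Subset\mathcal{O}$ with $\lambda_1(B)<1$ and carry the pointwise lower bound $v_k\ge\underline{u}$ through the limit; the paper instead takes the trivial subsolution $0$, imposes the boundary value $u_R=\psi>0$ on $\partial B_R\cap\mathcal{O}$ so that each $u_R$ is strictly positive by the strong maximum principle, and then rules out $u_0\equiv 0$ in the limit by a stability argument (each $u_R$ is stable by \Cref{prop:pos_stability}, stability passes to the limit, but the zero solution is unstable on $\mathcal{O}$ by \Cref{prop:zero_instability}). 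Your route is more elementary in that it avoids any appeal to stability; the paper's route avoids the obstacle/constrained-minimization bookkeeping you flag at the end, since with the pair $(0,\psi)$ the lower obstacle is a genuine solution and the existence of $u_R$ follows directly from standard monotone iteration. Both variants exploit the same geometric fact---that $\mathcal{O}$ contains arbitrarily large balls---just in different guises.
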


\begin{proof}
Let $\psi:=\min\{u_1,u_2\}$. By the hypotheses, $0<\psi<1$ in $\mathcal O$ and $\psi=0$ on $\Gamma$. Since $\psi$ is the minimum of two solutions, it is a supersolution of the Allen–Cahn equation in the viscosity sense.

Fix $R>1$ and set $D_R:=\mathcal O\cap B_R$. The domain $D_R$ is a bounded open set. The boundary data
$$
u=0\ \text{on } \Gamma\cap B_R,\qquad u=\psi\ \text{on } \partial B_R\cap\mathcal O
$$
are compatible along $\Gamma\cap\partial B_R$ because $\psi|_\Gamma\equiv0$.
Using the sub/supersolution pair $0\le\psi$, we obtain a solution
$$
u_R\in C^2(D_R)\cap C^0(\overline{D_R})\quad\text{of \eqref{eq:main} with}\quad 0\le u_R\le\psi\ \text{in }D_R.
$$

By interior Schauder estimates, the family $\{u_R\}_{R>1}$ is uniformly bounded in $C^{2,\alpha}(K)$ for every compact set $K \Subset \mathcal O$, with constants independent of $R$.
Hence there exists a subsequence $\{u_{R_j}\}$ converging to a function $u_0$ in $C^{2}_{\mathrm{loc}}(\mathcal O)$.
Since $0 \le u_0 \le \psi$ in $\mathcal O$, the limit $u_0$ extends continuously to $\Gamma$ with value $0$. Thus, $u_0 \in C^2(\mathcal{O})\cap C^0(\overline{\mathcal{O}})$ solves \eqref{eq:main} in $\mathcal{O}$ with $\{u_0 = 0\} \supset \Gamma$.

It remains to prove that $u_0 > 0$ in $\mathcal{O}$. As before, the strong maximum principle implies that either $u_0 > 0$ or $u_0 \equiv 0$ in $\mathcal{O}$.
Suppose for contradiction that $u_0 \equiv 0$.
Since $u_R$ is a positive solution in $D_R$ satisfying $0<u_R\le \psi<1$, $u_R$ is a stable solution in $D_R$ by \Cref{prop:pos_stability}. Passing to the limit $R \to \infty$ in the stability inequality, we have $u_0$ is also a stable solution in $\mathcal{O}$. However, since $\mathcal{O}$ contains balls of arbitrarily large radius, by \Cref{prop:zero_instability}, the trivial solution is unstable. This is a contradiction. Therefore, $u_0 > 0$ in $\mathcal{O}$, and consequently $\{u_0 = 0\} = \Gamma$.
\end{proof}

We are now in a position to prove the main theorem.

\begin{proof}[Proof of Theorem~\ref{thm:main-order}]
Let $u_1$ and $u_2$ be two solutions of \eqref{eq:main} such that their nodal sets are asymptotic to the minimal cone $\mathcal{C}$ in $C^1$ sense. Suppose that $\Omega_1 \subseteq \Omega_2$.
By Lemma~\ref{lem:minsol}, there exists a minimal solution $u \in C^2(\Omega_1) \cap C^0(\overline{\Omega}_1)$ satisfying
\begin{equation}\label{eq:u-min-prop}
0 < u(x) \le \min \{ u_1(x), u_2(x) \} \quad \text{for all } x \in \Omega_1,
\end{equation}
 and $u(x)=0$ for $x \in \partial\Omega_1$. Note that $u_2$ is positive in $\Omega_1$ (since $\Omega_1 \subseteq \Omega_2$), so $\min\{u_1, u_2\}$ is positive and \eqref{eq:u-min-prop} is well-defined.

We define the difference function $v := u_1 - u$ in $\overline{\Omega}_1$. From \eqref{eq:u-min-prop}, we have $v \ge 0$ in $\Omega_1$. Our goal is to show that $v \equiv 0$.

First, we verify that $v$ is a subsolution of the linearized operator at $u$. Let $L_u := \Delta + (1 - 3u^2)$. Since both $u_1$ and $u$ satisfy the Allen--Cahn equation \eqref{eq:main} in $\Omega_1$, we have
\[
\begin{aligned}
L_u v &= \Delta(u_1 - u) + (1-3u^2)(u_1 - u) \\
&= u_1^3 - u^3 - 3u^2(u_1 - u) = (u_1 - u)^2 (u_1 + 2u).
\end{aligned}
\]
Since $u_1 > 0$ and $u > 0$ in $\Omega_1$, it follows that $L_u v \ge 0$ in $\Omega_1$. Thus, $v$ is a subsolution for $L_u$ in $\Omega_1$.

Next, we consider the boundary and asymptotic behavior. Since $u_1 = 0$ and $u = 0$ on $\partial \Omega_1$, we clearly have $v = 0$ on $\partial \Omega_1$. 

Regarding the behavior at infinity, we apply Theorem~\ref{thm:asym}, which ensures that any solution whose nodal set is asymptotic to the minimal cone $\mathcal{C}$ converges uniformly to a canonical profile $U_\mathcal{C}$. Since both $u_1$ and $u$ belong to this class of solutions, we have
\[
\lim_{|x|\to\infty} |u_1(x) - U_\mathcal{C}(x)| = 0 \quad \text{and} \quad \lim_{|x|\to\infty} |u(x) - U_\mathcal{C}(x)| = 0.
\]
By the triangle inequality, it follows that
\[
\limsup_{|x|\to\infty, \, x\in\Omega_1} |v(x)| \le \limsup_{|x|\to\infty} |u_1(x) - U_\mathcal{C}(x)| + \limsup_{|x|\to\infty} |u(x) - U_\mathcal{C}(x)| = 0.
\]

Since $v$ vanishes on $\partial\Omega_1$ and at infinity, and satisfies $L_u v \ge 0$ in $\Omega_1$, we can apply the maximum principle for unbounded domains (Lemma~\ref{lem:max-principle-unbounded}) to conclude that $v \le 0$ in $\Omega_1$. Combining this with the fact that $v \ge 0$ by construction, we obtain $v \equiv 0$ in $\Omega_1$, which implies $u_1 \equiv u$ in $\overline{\Omega}_1$. Recalling \eqref{eq:u-min-prop}, we deduce
\begin{equation}\label{eq:pos_phase}
u_1 \le u_2 \quad \text{in } \Omega_1 = \{u_1 > 0\}.
\end{equation}

Next, we consider the negative phases. Note that the inclusion $\{u_1 > 0\} \subseteq \{u_2 > 0\}$ implies
\[
\{u_2 \le 0\} \subseteq \{u_1 \le 0\}.
\]
We claim that this implies the inclusion of the strictly negative phases, i.e., $\{u_2 < 0\} \subseteq \{u_1 < 0\}$.
Suppose, for the sake of contradiction, that this is not true. Then there exists a point $x_0$ such that $u_2(x_0) < 0$ but $u_1(x_0) \ge 0$. Since $x_0 \in \{u_2 \le 0\} \subseteq \{u_1 \le 0\}$, it must be that $u_1(x_0) = 0$.
Since the set $\{u_2 < 0\}$ is open, $x_0$ is an interior local maximum of $u_1$ (as $u_1 \le 0$ in this set).
Applying the strong maximum principle to the equation $-\Delta u_1 = u_1-u_1^3$, we conclude that $u_1 \equiv 0$ in the connected component containing $x_0$, and by unique continuation, $u_1 \equiv 0$ globally. This contradicts the assumption that $u_1$ is a nonconstant solution.
Thus, we have $\{u_2 < 0\} \subseteq \{u_1 < 0\}$.

Now, let $v_1 := -u_2$ and $v_2 := -u_1$. Since the nonlinearity $f(u) = u - u^3$ is odd, $v_1$ and $v_2$ are also solutions to \eqref{eq:main}. The inclusion proved above translates to $\{v_1 > 0\} \subseteq \{v_2 > 0\}$.
We can thus apply the exact same argument as in the positive phase case to the pair $v_1$ and $v_2$. This yields $v_1 \le v_2$ in $\{v_1 > 0\}$, which corresponds to $-u_2 \le -u_1$ in $\{u_2 < 0\}$. Equivalently,
\begin{equation}\label{eq:neg_phase}
u_1 \le u_2 \quad \text{in }  \{u_2 < 0\}.
\end{equation}

Finally, in the remaining region where $u_1 \le 0$ and $u_2 \ge 0$, the inequality $u_1 \le u_2$ holds trivially. Combining this with \eqref{eq:pos_phase} and \eqref{eq:neg_phase}, we conclude that $u_1 \le u_2$ in the whole space $\mathbb{R}^n$.

The dichotomy that either $u_1 \equiv u_2$ or $u_1 < u_2$ everywhere then follows directly from the strong maximum principle. This completes the proof.
\qedhere
\end{proof}

We close this section with two direct corollaries of \Cref{thm:main-order}.

\begin{corollary}[Strict ordering and disjoint nodal sets]\label{cor:strict}
Let $u_1$ and $u_2$ be solutions satisfying the assumptions of Theorem~\ref{thm:main-order}, and let $\Omega_i = \{u_i > 0\}$.
If $\Omega_1 \subsetneq \Omega_2$, then $u_1 < u_2$ in $\mathbb{R}^n$.
Consequently, the nodal sets are disjoint, i.e., $\partial \Omega_1 \cap \partial \Omega_2 = \emptyset$.
\end{corollary}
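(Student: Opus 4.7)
The plan is to apply \Cref{thm:main-order} directly and exploit its dichotomy, ruling out the equality alternative by means of the strict inclusion hypothesis. Under the assumption $\Omega_1 \subseteq \Omega_2$, the main theorem produces the global ordering $u_1 \le u_2$ on $\mathbb{R}^n$, together with the alternative that either $u_1 \equiv u_2$ or $u_1 < u_2$ pointwise. The first step is to eliminate the coincidence $u_1 \equiv u_2$: if it held, then the positive phases would automatically agree, i.e., $\{u_1 > 0\} = \{u_2 > 0\}$, forcing $\Omega_1 = \Omega_2$ and contradicting the strict inclusion $\Omega_1 \subsetneq \Omega_2$. Hence the strict inequality $u_1 < u_2$ on $\mathbb{R}^n$ must hold.

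For the disjointness of the nodal sets, I would argue by contradiction. Suppose there existed a common point $x_0 \in \partial \Omega_1 \cap \partial \Omega_2$. By continuity of $u_1$ and $u_2$, together with the inclusions $\partial \Omega_i \subset \{u_i = 0\}$, it follows that $u_1(x_0) = u_2(x_0) = 0$, which contradicts the pointwise strict inequality $u_1(x_0) < u_2(x_0)$ established in the first step. Thus $\partial \Omega_1 \cap \partial \Omega_2 = \emptyset$, as claimed.

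Both conclusions reduce to invoking the dichotomy of \Cref{thm:main-order} and the continuity of the solutions, so I do not anticipate any genuine obstacle in the proof of the corollary itself. The substantive analytic content has already been absorbed into the preceding results, namely the maximum principle for unbounded domains (\Cref{lem:max-principle-unbounded}) and the asymptotic convergence to the one-dimensional profile (\Cref{thm:asym}), both of which underpin the strong maximum principle dichotomy used here.
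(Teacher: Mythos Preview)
Your argument is correct and matches the paper's treatment: the paper states this result as a direct corollary of \Cref{thm:main-order} without giving a separate proof, and your reasoning---apply the dichotomy, rule out $u_1\equiv u_2$ via the strict inclusion, then deduce disjointness of the nodal sets from the strict pointwise inequality---is exactly the intended immediate deduction.
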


A significant consequence of \Cref{cor:strict} is the rigidity of the nodal set against \textit{one-sided} local perturbations.
If a solution's nodal set is asymptotic to a minimal cone, it is impossible to deform the nodal set solely within a compact set in one direction (i.e., strictly enlarging or shrinking the positive phase) to form a nodal set of another solution.

\medskip

Finally, we apply \Cref{thm:main-order} to the saddle-shaped solution $u_s$. Since the nodal set of $u_s$ is exactly the Simons cone, it serves as a global barrier for any solution whose positive phase is contained within one side of the cone.

\begin{corollary}[Comparison with the saddle-shaped solution]\label{cor:comparison-saddle}
Let $\mathcal{C}_{m,m}$ be the Simons cone defined by \eqref{eq:simonscone} and let $u_s$ be the saddle-shaped solution of \eqref{eq:main} (see Definition~\ref{def:saddle}).
Suppose that $u$ is a nonconstant solution of \eqref{eq:main} whose nodal set is asymptotic to $\mathcal{C}_{m,m}$.

\begin{enumerate}[label=(\roman*)]
    \item If the positive phase of $u$ is contained in $\mathcal{C}_{m,m}^+$, i.e., $\{u > 0\} \subseteq \{ |x| > |y| \}$, then
    \[
    u(x, y) \le u_s(x, y) \quad \text{for all } (x, y) \in \mathbb{R}^{2m}.
    \]

    \item If the positive phase of $u$ is contained in $\mathcal{C}_{m,m}^-$, i.e., $\{u > 0\} \subseteq \{ |x| < |y| \}$, then
    \[
    u(x, y) \le -u_s(x, y) \quad \text{for all } (x, y) \in \mathbb{R}^{2m}.
    \]
\end{enumerate}
\end{corollary}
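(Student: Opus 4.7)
The plan is to apply Theorem~\ref{thm:main-order} directly, using the saddle-shaped solution $u_s$ (for part~(i)) and its reflection $-u_s$ (for part~(ii)) as the second comparator. The entire corollary will then follow once I verify that $u_s$ itself fits into the framework of the main theorem.

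First I check that $u_s$ satisfies the hypotheses of Theorem~\ref{thm:main-order}. By Definition~\ref{def:saddle}, condition~(iii) forces $u_s \equiv 0$ on the diagonal $\{s=t\}$, i.e.\ on $\mathcal{C}_{m,m}$, while conditions~(ii) and~(iii) combined yield $u_s > 0$ on $\mathcal{C}_{m,m}^+$ and $u_s < 0$ on $\mathcal{C}_{m,m}^-$. Hence the nodal set of $u_s$ coincides exactly with the Simons cone, and in particular $\{u_s > 0\} = \mathcal{C}_{m,m}^+$ and $u_s$ is nonconstant. Since its nodal set equals $\mathcal{C}_{m,m}$ itself, it is trivially asymptotic to $\mathcal{C}_{m,m}$ in the $C^1$ sense in the sense of Definition~\ref{def:asymptotic_C1}: take $K = \overline{B}_R$ for any $R > 0$ and the normal graph function $h \equiv 0$, so that the decay condition \eqref{eq:C1_decay} holds vacuously.

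For part~(i), the hypothesis $\{u > 0\} \subseteq \{|x| > |y|\} = \mathcal{C}_{m,m}^+ = \{u_s > 0\}$, together with the facts that both $u$ and $u_s$ are nonconstant solutions of \eqref{eq:main} whose nodal sets are asymptotic to $\mathcal{C}_{m,m}$ in the $C^1$ sense, means the pair $(u_1, u_2) = (u, u_s)$ satisfies the hypotheses of Theorem~\ref{thm:main-order}. The conclusion $u \le u_s$ in $\mathbb{R}^{2m}$ follows immediately. For part~(ii), I use that the nonlinearity $f(\xi) = \xi - \xi^3$ is odd, so $-u_s$ is also a solution of \eqref{eq:main}. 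Its nodal set is again $\mathcal{C}_{m,m}$ and its positive phase is $\{-u_s > 0\} = \{u_s < 0\} = \mathcal{C}_{m,m}^-$. The hypothesis $\{u > 0\} \subseteq \mathcal{C}_{m,m}^-$ thus becomes $\{u > 0\} \subseteq \{-u_s > 0\}$, and Theorem~\ref{thm:main-order} applied to $(u, -u_s)$ yields $u \le -u_s$ in $\mathbb{R}^{2m}$. There is no substantive obstacle in the argument; all the analytic work has been absorbed into Theorem~\ref{thm:main-order}, and the corollary is a clean application of it once the structural properties of $u_s$ and the odd symmetry of the nonlinearity are recorded.
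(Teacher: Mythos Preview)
Your proposal is correct and is precisely the approach the paper intends: the corollary is stated as a direct consequence of Theorem~\ref{thm:main-order} without a separate proof, and your verification that $u_s$ (and $-u_s$, via the odd nonlinearity) fits the hypotheses of the theorem is exactly what is needed.
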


\begin{remark}
As a consequence, one obtains a global pointwise bound for $u$ in terms of 1D solution $u_0$.
Indeed, it was proved in \cite{CT09} that the saddle-shaped solution $u_s$ satisfies
\[
|u_s(x, y)| \le \left| u_0\left( \mathrm{dist}\left((x, y), \mathcal{C}\right) \right) \right|
= \left| u_0\left( \frac{|x| - |y|}{\sqrt{2}} \right) \right| \quad \text{for all } (x, y) \in \mathbb{R}^{2m},
\]
where $u_0(z) = \tanh(z/\sqrt{2})$ denotes the 1D heteroclinic profile.
\end{remark}

\bibliographystyle{acm}
\bibliography{ref}

@article{Han94,
	author = {Han, Qing},
	date-added = {2026-01-07 00:20:04 +0900},
	date-modified = {2026-01-07 00:20:10 +0900},
	doi = {10.1512/iumj.1994.43.43043},
	fjournal = {Indiana University Mathematics Journal},
	issn = {0022-2518,1943-5258},
	journal = {Indiana Univ. Math. J.},
	mrclass = {35J15 (35A20)},
	mrnumber = {1305956},
	mrreviewer = {Tatiana\ Toro},
	number = {3},
	pages = {983--1002},
	title = {Singular sets of solutions to elliptic equations},
	url = {https://doi.org/10.1512/iumj.1994.43.43043},
	volume = {43},
	year = {1994},
	bdsk-url-1 = {https://doi.org/10.1512/iumj.1994.43.43043}}

@article{BNV94,
	author = {Berestycki, H. and Nirenberg, L. and Varadhan, S. R. S.},
	date-added = {2026-01-03 02:24:46 +0900},
	date-modified = {2026-01-03 02:24:50 +0900},
	doi = {10.1002/cpa.3160470105},
	fjournal = {Communications on Pure and Applied Mathematics},
	issn = {0010-3640,1097-0312},
	journal = {Comm. Pure Appl. Math.},
	mrclass = {35J15 (35B50 35J20 35P15)},
	mrnumber = {1258192},
	mrreviewer = {M.\ S.\ Agranovich},
	number = {1},
	pages = {47--92},
	title = {The principal eigenvalue and maximum principle for second-order elliptic operators in general domains},
	url = {https://doi-org-ssl.proxy.konkuk.ac.kr/10.1002/cpa.3160470105},
	volume = {47},
	year = {1994},
	bdsk-url-1 = {https://doi-org-ssl.proxy.konkuk.ac.kr/10.1002/cpa.3160470105},
	bdsk-url-2 = {https://doi.org/10.1002/cpa.3160470105}}

@article{Wickramasekera14,
	author = {Wickramasekera, Neshan},
	date-added = {2025-12-27 20:26:20 +0900},
	date-modified = {2025-12-27 20:26:27 +0900},
	doi = {10.1007/s00526-013-0695-4},
	fjournal = {Calculus of Variations and Partial Differential Equations},
	issn = {0944-2669,1432-0835},
	journal = {Calc. Var. Partial Differential Equations},
	mrclass = {58C99 (35B50 35B60 35R01 53A10 53C42)},
	mrnumber = {3268871},
	mrreviewer = {Yawei\ Chu},
	number = {3-4},
	pages = {799--812},
	title = {A sharp strong maximum principle and a sharp unique continuation theorem for singular minimal hypersurfaces},
	url = {https://doi-org-ssl.libproxy.snu.ac.kr/10.1007/s00526-013-0695-4},
	volume = {51},
	year = {2014},
	bdsk-url-1 = {https://doi-org-ssl.libproxy.snu.ac.kr/10.1007/s00526-013-0695-4},
	bdsk-url-2 = {https://doi.org/10.1007/s00526-013-0695-4}}

@article{Ilmanen96,
	author = {Ilmanen, T.},
	date-added = {2025-12-27 20:25:42 +0900},
	date-modified = {2025-12-27 20:26:02 +0900},
	doi = {10.1007/BF01246151},
	fjournal = {Calculus of Variations and Partial Differential Equations},
	issn = {0944-2669,1432-0835},
	journal = {Calc. Var. Partial Differential Equations},
	mrclass = {49Q05 (58E12)},
	mrnumber = {1402732},
	mrreviewer = {Anna\ Salvadori},
	number = {5},
	pages = {443--467},
	title = {A strong maximum principle for singular minimal hypersurfaces},
	url = {https://doi.org/10.1007/BF01246151},
	volume = {4},
	year = {1996},
	bdsk-url-1 = {https://doi.org/10.1007/BF01246151}}

@article{Simon87,
	author = {Simon, Leon},
	date-added = {2025-12-27 20:25:42 +0900},
	date-modified = {2025-12-27 20:25:59 +0900},
	fjournal = {Journal of Differential Geometry},
	issn = {0022-040X,1945-743X},
	journal = {J. Differential Geom.},
	mrclass = {49F10 (35J60 49F20 53C42)},
	mrnumber = {906394},
	mrreviewer = {Michael\ Gr\"uter},
	number = {2},
	pages = {327--335},
	title = {A strict maximum principle for area minimizing hypersurfaces},
	url = {http://projecteuclid.org/euclid.jdg/1214441373},
	volume = {26},
	year = {1987},
	bdsk-url-1 = {http://projecteuclid.org/euclid.jdg/1214441373}}

@article{SW89,
	author = {Solomon, Bruce and White, Brian},
	date-added = {2025-12-27 20:25:42 +0900},
	date-modified = {2025-12-27 20:25:47 +0900},
	doi = {10.1512/iumj.1989.38.38032},
	fjournal = {Indiana University Mathematics Journal},
	issn = {0022-2518,1943-5258},
	journal = {Indiana Univ. Math. J.},
	mrclass = {49F20},
	mrnumber = {1017330},
	mrreviewer = {Martin\ Fuchs},
	number = {3},
	pages = {683--691},
	title = {A strong maximum principle for varifolds that are stationary with respect to even parametric elliptic functionals},
	url = {https://doi.org/10.1512/iumj.1989.38.38032},
	volume = {38},
	year = {1989},
	bdsk-url-1 = {https://doi.org/10.1512/iumj.1989.38.38032}}

@misc{AR25_arxiv,
	archiveprefix = {arXiv},
	author = {Oscar Agudelo and Matteo Rizzi},
	date-added = {2025-12-18 23:19:50 +0900},
	date-modified = {2025-12-18 23:19:50 +0900},
	eprint = {2501.15812},
	primaryclass = {math.DG},
	title = {Lawson cones and the Allen-Cahn equation},
	url = {https://arxiv.org/abs/2501.15812},
	year = {2025},
	bdsk-url-1 = {https://arxiv.org/abs/2501.15812}}

@article{BDGG69,
	author = {Bombieri, E. and De Giorgi, E. and Giusti, E.},
	date-added = {2025-12-18 20:17:34 +0900},
	date-modified = {2025-12-18 20:44:10 +0900},
	doi = {10.1007/BF01404309},
	fjournal = {Inventiones Mathematicae},
	issn = {0020-9910},
	journal = {Invent. Math.},
	mrclass = {53.04},
	mrnumber = {250205},
	mrreviewer = {E. F. Beckenbach},
	pages = {243--268},
	title = {Minimal cones and the {B}ernstein problem},
	url = {http://lps3.doi.org.libproxy.snu.ac.kr/10.1007/BF01404309},
	volume = {7},
	year = {1969},
	bdsk-url-1 = {http://lps3.doi.org.libproxy.snu.ac.kr/10.1007/BF01404309},
	bdsk-url-2 = {https://doi.org/10.1007/BF01404309}}

@article{HS85,
	author = {Hardt, Robert and Simon, Leon},
	date-added = {2025-12-18 20:09:21 +0900},
	date-modified = {2025-12-18 20:09:24 +0900},
	doi = {10.1515/crll.1985.362.102},
	fjournal = {Journal f\"{u}r die Reine und Angewandte Mathematik. [Crelle's Journal]},
	issn = {0075-4102,1435-5345},
	journal = {J. Reine Angew. Math.},
	mrclass = {49F10 (58E12)},
	mrnumber = {809969},
	mrreviewer = {A.\ Averna},
	pages = {102--129},
	title = {Area minimizing hypersurfaces with isolated singularities},
	url = {https://doi-org-ssl.libproxy.snu.ac.kr/10.1515/crll.1985.362.102},
	volume = {362},
	year = {1985},
	bdsk-url-1 = {https://doi-org-ssl.libproxy.snu.ac.kr/10.1515/crll.1985.362.102},
	bdsk-url-2 = {https://doi.org/10.1515/crll.1985.362.102}}

@article{CT09,
	author = {Cabr\'e, Xavier and Terra, Joana},
	date-added = {2025-12-18 14:34:40 +0900},
	date-modified = {2025-12-18 14:34:49 +0900},
	doi = {10.4171/JEMS/168},
	fjournal = {Journal of the European Mathematical Society (JEMS)},
	issn = {1435-9855,1435-9863},
	journal = {J. Eur. Math. Soc. (JEMS)},
	mrclass = {35J91 (35B35 35J60)},
	mrnumber = {2538506},
	mrreviewer = {Nikos\ Labropoulos},
	number = {4},
	pages = {819--843},
	title = {Saddle-shaped solutions of bistable diffusion equations in all of {$\Bbb R^{2m}$}},
	url = {https://doi-org-ssl.libproxy.snu.ac.kr/10.4171/JEMS/168},
	volume = {11},
	year = {2009},
	bdsk-url-1 = {https://doi-org-ssl.libproxy.snu.ac.kr/10.4171/JEMS/168},
	bdsk-url-2 = {https://doi.org/10.4171/JEMS/168}}

@article{Cabre12,
	author = {Cabr\'e, Xavier},
	date-added = {2025-12-18 13:42:51 +0900},
	date-modified = {2025-12-18 13:43:04 +0900},
	doi = {10.1016/j.matpur.2012.02.006},
	fjournal = {Journal de Math\'ematiques Pures et Appliqu\'ees. Neuvi\`eme S\'erie},
	issn = {0021-7824,1776-3371},
	journal = {J. Math. Pures Appl. (9)},
	mrclass = {35J91 (35A02 35B35)},
	mrnumber = {2960334},
	mrreviewer = {Alkis\ Tersenov},
	number = {3},
	pages = {239--256},
	title = {Uniqueness and stability of saddle-shaped solutions to the {A}llen-{C}ahn equation},
	url = {https://doi-org-ssl.libproxy.snu.ac.kr/10.1016/j.matpur.2012.02.006},
	volume = {98},
	year = {2012},
	bdsk-url-1 = {https://doi-org-ssl.libproxy.snu.ac.kr/10.1016/j.matpur.2012.02.006},
	bdsk-url-2 = {https://doi.org/10.1016/j.matpur.2012.02.006}}

@article{Savin09,
	author = {Savin, Ovidiu},
	date-added = {2025-12-13 14:18:14 +0900},
	date-modified = {2025-12-13 14:19:15 +0900},
	doi = {10.4007/annals.2009.169.41},
	fjournal = {Annals of Mathematics. Second Series},
	issn = {0003-486X,1939-8980},
	journal = {Ann. of Math. (2)},
	mrclass = {58E12 (35J60 58E50)},
	mrnumber = {2480601},
	mrreviewer = {Vicen\c tiu\ D.\ R\u adulescu},
	number = {1},
	pages = {41--78},
	title = {Regularity of flat level sets in phase transitions},
	url = {https://doi.org/10.4007/annals.2009.169.41},
	volume = {169},
	year = {2009},
	bdsk-url-1 = {https://doi.org/10.4007/annals.2009.169.41}}

@article{AC00,
	author = {Ambrosio, Luigi and Cabr\'e, Xavier},
	date-added = {2025-12-13 14:17:13 +0900},
	date-modified = {2025-12-13 14:17:19 +0900},
	doi = {10.1090/S0894-0347-00-00345-3},
	fjournal = {Journal of the American Mathematical Society},
	issn = {0894-0347,1088-6834},
	journal = {J. Amer. Math. Soc.},
	mrclass = {35J60 (35B05 35B40 35B45)},
	mrnumber = {1775735},
	mrreviewer = {Vitaly\ B.\ Moroz},
	number = {4},
	pages = {725--739},
	title = {Entire solutions of semilinear elliptic equations in {$\bold R^3$} and a conjecture of {D}e {G}iorgi},
	url = {https://doi.org/10.1090/S0894-0347-00-00345-3},
	volume = {13},
	year = {2000},
	bdsk-url-1 = {https://doi.org/10.1090/S0894-0347-00-00345-3}}

@article{GG98,
	author = {Ghoussoub, N. and Gui, C.},
	date-added = {2025-12-13 14:15:48 +0900},
	date-modified = {2025-12-13 14:15:53 +0900},
	doi = {10.1007/s002080050196},
	fjournal = {Mathematische Annalen},
	issn = {0025-5831,1432-1807},
	journal = {Math. Ann.},
	mrclass = {35J60 (35B05)},
	mrnumber = {1637919},
	mrreviewer = {Alan\ V.\ Lair},
	number = {3},
	pages = {481--491},
	title = {On a conjecture of {D}e {G}iorgi and some related problems},
	url = {https://doi.org/10.1007/s002080050196},
	volume = {311},
	year = {1998},
	bdsk-url-1 = {https://doi.org/10.1007/s002080050196}}

@article{HLS+21,
	author = {Hamel, Fran\c{c}ois and Liu, Yong and Sicbaldi, Pieralberto and Wang, Kelei and Wei, Juncheng},
	date-added = {2024-03-18 01:05:14 +0900},
	date-modified = {2024-03-18 08:58:49 +0900},
	doi = {10.1515/crelle-2020-0007},
	fjournal = {Journal f\"{u}r die Reine und Angewandte Mathematik. [Crelle's Journal]},
	issn = {0075-4102,1435-5345},
	journal = {J. Reine Angew. Math.},
	mrclass = {35Q55},
	mrnumber = {4193464},
	pages = {113--133},
	title = {Half-space theorems for the {A}llen-{C}ahn equation and related problems},
	url = {https://doi-org-ssl.libproxy.snu.ac.kr/10.1515/crelle-2020-0007},
	volume = {770},
	year = {2021},
	bdsk-url-1 = {https://doi-org-ssl.libproxy.snu.ac.kr/10.1515/crelle-2020-0007},
	bdsk-url-2 = {https://doi.org/10.1515/crelle-2020-0007}}

@article{LWW17,
	author = {Liu, Yong and Wang, Kelei and Wei, Juncheng},
	journal = {J. Math. Pures Appl. (9)},
	number = {6},
	pages = {818--840},
	title = {Global minimizers of the {A}llen-{C}ahn equation in dimension {$n\geq 8$}},
	volume = {108},
	year = {2017}}

@article{PW13,
	author = {Pacard, Frank and Wei, Juncheng},
	journal = {J. Funct. Anal.},
	number = {5},
	pages = {1131--1167},
	title = {Stable solutions of the {A}llen-{C}ahn equation in dimension 8 and minimal cones},
	volume = {264},
	year = {2013}}

@article{CT10,
	author = {X. Cabr{\'e} and J. Terra},
	date-modified = {2025-12-18 20:10:44 +0900},
	doi = {10.1080/03605302.2010.484039},
	eprint = {https://doi.org/10.1080/03605302.2010.484039},
	journal = {Communications in Partial Differential Equations},
	number = {11},
	pages = {1923--1957},
	publisher = {Taylor \& Francis},
	title = {Qualitative Properties of Saddle-Shaped Solutions to Bistable Diffusion Equations},
	url = {https://doi.org/10.1080/03605302.2010.484039},
	volume = {35},
	year = {2010},
	bdsk-url-1 = {https://doi.org/10.1080/03605302.2010.484039}}

@article{Modica85,
	author = {Modica, Luciano},
	doi = {10.1002/cpa.3160380515},
	fjournal = {Communications on Pure and Applied Mathematics},
	issn = {0010-3640,1097-0312},
	journal = {Comm. Pure Appl. Math.},
	mrclass = {35J60 (35Q05)},
	mrnumber = {803255},
	mrreviewer = {Monique\ Dauge},
	number = {5},
	pages = {679--684},
	title = {A gradient bound and a {L}iouville theorem for nonlinear {P}oisson equations},
	url = {https://doi.org/10.1002/cpa.3160380515},
	volume = {38},
	year = {1985},
	bdsk-url-1 = {https://doi.org/10.1002/cpa.3160380515}}

@article{BHN05,
	author = {Berestycki, Henri and Hamel, Fran\c cois and Nadirashvili, Nikolai},
	date-added = {2025-12-06 16:31:32 +0900},
	date-modified = {2025-12-06 16:31:39 +0900},
	doi = {10.4171/JEMS/26},
	fjournal = {Journal of the European Mathematical Society (JEMS)},
	issn = {1435-9855,1435-9863},
	journal = {J. Eur. Math. Soc. (JEMS)},
	mrclass = {35K55 (35B10 35B30 35B50 35J60 35P15)},
	mrnumber = {2127993},
	mrreviewer = {Juli\'an\ Aguirre},
	number = {2},
	pages = {173--213},
	title = {The speed of propagation for {KPP} type problems. {I}. {P}eriodic framework},
	url = {https://doi.org/10.4171/JEMS/26},
	volume = {7},
	year = {2005},
	bdsk-url-1 = {https://doi.org/10.4171/JEMS/26}}

@article{Angenent85,
	author = {Angenent, S. B.},
	doi = {10.1007/BF01455933},
	fjournal = {Mathematische Annalen},
	issn = {0025-5831,1432-1807},
	journal = {Math. Ann.},
	mrclass = {35J65},
	mrnumber = {794096},
	mrreviewer = {Andrzej\ Szulkin},
	number = {1},
	pages = {129--138},
	title = {Uniqueness of the solution of a semilinear boundary value problem},
	url = {https://doi.org/10.1007/BF01455933},
	volume = {272},
	year = {1985},
	bdsk-url-1 = {https://doi.org/10.1007/BF01455933}}

@article{dPKW11,
	author = {del Pino, Manuel and Kowalczyk, Micha\l{} and Wei, Juncheng},
	doi = {10.4007/annals.2011.174.3.3},
	fjournal = {Annals of Mathematics. Second Series},
	issn = {0003-486X,1939-8980},
	journal = {Ann. of Math. (2)},
	mrclass = {35J91 (35B05 35B06 35B25 35J20)},
	mrnumber = {2846486},
	mrreviewer = {Rolando\ Magnanini},
	number = {3},
	pages = {1485--1569},
	title = {On {D}e {G}iorgi's conjecture in dimension {$N\geq 9$}},
	url = {https://doi.org/10.4007/annals.2011.174.3.3},
	volume = {174},
	year = {2011},
	bdsk-url-1 = {https://doi.org/10.4007/annals.2011.174.3.3}}

@inproceedings{DeG79,
	author = {De Giorgi, Ennio},
	booktitle = {Proceedings of the {I}nternational {M}eeting on {R}ecent {M}ethods in {N}onlinear {A}nalysis ({R}ome, 1978)},
	mrclass = {49A50 (49D99)},
	mrnumber = {533166},
	mrreviewer = {Carlo\ Sbordone},
	pages = {131--188},
	publisher = {Pitagora, Bologna},
	title = {Convergence problems for functionals and operators},
	year = {1979}}

@misc{floritsimon2025stablesolutionsallencahnequation,
	archiveprefix = {arXiv},
	author = {Enric Florit-Simon and Joaquim Serra},
	eprint = {2509.02739},
	primaryclass = {math.AP},
	title = {On stable solutions to the Allen-Cahn equation with bounded energy density in $\mathbb{R}^4$},
	url = {https://arxiv.org/abs/2509.02739},
	year = {2025},
	bdsk-url-1 = {https://arxiv.org/abs/2509.02739}}

@misc{chan2025globalstablesolutionsfree,
	archiveprefix = {arXiv},
	author = {Hardy Chan and Xavier Fern{\'a}ndez-Real and Alessio Figalli and Joaquim Serra},
	eprint = {2503.21245},
	primaryclass = {math.AP},
	title = {Global Stable Solutions to the Free Boundary Allen--Cahn and Bernoulli Problems in 3D are One-Dimensional},
	url = {https://arxiv.org/abs/2503.21245},
	year = {2025},
	bdsk-url-1 = {https://arxiv.org/abs/2503.21245}}

@article{MR4050103,
	author = {Figalli, Alessio and Serra, Joaquim},
	doi = {10.1007/s00222-019-00904-2},
	fjournal = {Inventiones Mathematicae},
	issn = {0020-9910,1432-1297},
	journal = {Invent. Math.},
	mrclass = {35R11},
	mrnumber = {4050103},
	mrreviewer = {Vincenzo\ Ambrosio},
	number = {1},
	pages = {153--177},
	title = {On stable solutions for boundary reactions: a {D}e {G}iorgi-type result in dimension {$4+1$}},
	url = {https://doi.org/10.1007/s00222-019-00904-2},
	volume = {219},
	year = {2020},
	bdsk-url-1 = {https://doi.org/10.1007/s00222-019-00904-2}}

@article{MR2227143,
	author = {Caffarelli, Luis A. and C\'ordoba, Antonio},
	doi = {10.1515/CRELLE.2006.033},
	fjournal = {Journal f\"ur die Reine und Angewandte Mathematik. [Crelle's Journal]},
	issn = {0075-4102,1435-5345},
	journal = {J. Reine Angew. Math.},
	mrclass = {49N60 (58E30 74G65 74N99)},
	mrnumber = {2227143},
	mrreviewer = {Mikil\ D.\ Foss},
	pages = {209--235},
	title = {Phase transitions: uniform regularity of the intermediate layers},
	url = {https://doi.org/10.1515/CRELLE.2006.033},
	volume = {593},
	year = {2006},
	bdsk-url-1 = {https://doi.org/10.1515/CRELLE.2006.033}}

@article{MR3945835,
	author = {Gaspar, Pedro and Guaraco, Marco A. M.},
	doi = {10.1007/s00039-019-00489-1},
	fjournal = {Geometric and Functional Analysis},
	issn = {1016-443X,1420-8970},
	journal = {Geom. Funct. Anal.},
	mrclass = {58J05 (49Q05)},
	mrnumber = {3945835},
	mrreviewer = {Piotr\ Rybka},
	number = {2},
	pages = {382--410},
	title = {The {W}eyl law for the phase transition spectrum and density of limit interfaces},
	url = {https://doi.org/10.1007/s00039-019-00489-1},
	volume = {29},
	year = {2019},
	bdsk-url-1 = {https://doi.org/10.1007/s00039-019-00489-1}}

@article{MR3935478,
	author = {Wang, Kelei and Wei, Juncheng},
	doi = {10.1002/cpa.21812},
	fjournal = {Communications on Pure and Applied Mathematics},
	issn = {0010-3640,1097-0312},
	journal = {Comm. Pure Appl. Math.},
	mrclass = {35J91 (53A10 58E05)},
	mrnumber = {3935478},
	mrreviewer = {Jos\'e\ Carmona Tapia},
	number = {5},
	pages = {1044--1119},
	title = {Finite {M}orse index implies finite ends},
	url = {https://doi.org/10.1002/cpa.21812},
	volume = {72},
	year = {2019},
	bdsk-url-1 = {https://doi.org/10.1002/cpa.21812}}

@article{MR4021161,
	author = {Wang, Kelei and Wei, Juncheng},
	doi = {10.1016/j.aim.2019.106856},
	fjournal = {Advances in Mathematics},
	issn = {0001-8708,1090-2082},
	journal = {Adv. Math.},
	mrclass = {35J91 (35B25 35B65)},
	mrnumber = {4021161},
	mrreviewer = {Alessio\ Pomponio},
	pages = {106856, 85},
	title = {Second order estimate on transition layers},
	url = {https://doi.org/10.1016/j.aim.2019.106856},
	volume = {358},
	year = {2019},
	bdsk-url-1 = {https://doi.org/10.1016/j.aim.2019.106856}}

@article{MR2948876,
	author = {Tonegawa, Yoshihiro and Wickramasekera, Neshan},
	doi = {10.1515/crelle.2011.134},
	fjournal = {Journal f\"ur die Reine und Angewandte Mathematik. [Crelle's Journal]},
	issn = {0075-4102,1435-5345},
	journal = {J. Reine Angew. Math.},
	mrclass = {58E50 (35B25 35J20 49Q05)},
	mrnumber = {2948876},
	mrreviewer = {Futoshi\ Takahashi},
	pages = {191--210},
	title = {Stable phase interfaces in the van der {W}aals--{C}ahn--{H}illiard theory},
	url = {https://doi.org/10.1515/crelle.2011.134},
	volume = {668},
	year = {2012},
	bdsk-url-1 = {https://doi.org/10.1515/crelle.2011.134}}

@article{MR3743704,
	author = {Guaraco, Marco A. M.},
	doi = {10.4310/jdg/1513998031},
	fjournal = {Journal of Differential Geometry},
	issn = {0022-040X,1945-743X},
	journal = {J. Differential Geom.},
	mrclass = {49Q05 (53C42 58E12)},
	mrnumber = {3743704},
	mrreviewer = {Marc\ Michel\ Soret},
	number = {1},
	pages = {91--133},
	title = {Min-max for phase transitions and the existence of embedded minimal hypersurfaces},
	url = {https://doi.org/10.4310/jdg/1513998031},
	volume = {108},
	year = {2018},
	bdsk-url-1 = {https://doi.org/10.4310/jdg/1513998031}}

@article{MR1803974,
	author = {Hutchinson, John E. and Tonegawa, Yoshihiro},
	doi = {10.1007/PL00013453},
	fjournal = {Calculus of Variations and Partial Differential Equations},
	issn = {0944-2669,1432-0835},
	journal = {Calc. Var. Partial Differential Equations},
	mrclass = {49Q20 (35B25 35B38 35J60 49J45 80A22)},
	mrnumber = {1803974},
	mrreviewer = {Werner\ Horn},
	number = {1},
	pages = {49--84},
	title = {Convergence of phase interfaces in the van der {W}aals-{C}ahn-{H}illiard theory},
	url = {https://doi.org/10.1007/PL00013453},
	volume = {10},
	year = {2000},
	bdsk-url-1 = {https://doi.org/10.1007/PL00013453}}

@article{MR4831349,
	author = {Li, Martin Man-chun and Parise, Davide and Sarnataro, Lorenzo},
	doi = {10.1007/s00205-024-02070-z},
	fjournal = {Archive for Rational Mechanics and Analysis},
	issn = {0003-9527,1432-0673},
	journal = {Arch. Ration. Mech. Anal.},
	mrclass = {35R01 (35J25 35J67 49Q15)},
	mrnumber = {4831349},
	mrreviewer = {Haitao\ Wan},
	number = {6},
	pages = {Paper No. 124, 43},
	title = {Boundary behavior of limit-interfaces for the {A}llen-{C}ahn equation on {R}iemannian manifolds with {N}eumann boundary condition},
	url = {https://doi.org/10.1007/s00205-024-02070-z},
	volume = {248},
	year = {2024},
	bdsk-url-1 = {https://doi.org/10.1007/s00205-024-02070-z}}

@article{MR2434903,
	author = {del Pino, Manuel and Kowalczyk, Micha\l{} and Wei, Juncheng},
	doi = {10.1007/s00205-008-0143-3},
	fjournal = {Archive for Rational Mechanics and Analysis},
	issn = {0003-9527,1432-0673},
	journal = {Arch. Ration. Mech. Anal.},
	mrclass = {35J60 (35B25 35J25 49J45 82C24)},
	mrnumber = {2434903},
	mrreviewer = {Alessio\ Pomponio},
	number = {1},
	pages = {141--187},
	title = {The {T}oda system and clustering interfaces in the {A}llen-{C}ahn equation},
	url = {https://doi.org/10.1007/s00205-008-0143-3},
	volume = {190},
	year = {2008},
	bdsk-url-1 = {https://doi.org/10.1007/s00205-008-0143-3}}

@article{MR2995371,
	author = {del Pino, Manuel and Kowalczyk, Micha\l{} and Pacard, Frank},
	doi = {10.1090/S0002-9947-2012-05594-2},
	fjournal = {Transactions of the American Mathematical Society},
	issn = {0002-9947,1088-6850},
	journal = {Trans. Amer. Math. Soc.},
	mrclass = {35J91 (35B08 35Q92)},
	mrnumber = {2995371},
	mrreviewer = {Sanjiban\ Santra},
	number = {2},
	pages = {721--766},
	title = {Moduli space theory for the {A}llen-{C}ahn equation in the plane},
	url = {https://doi.org/10.1090/S0002-9947-2012-05594-2},
	volume = {365},
	year = {2013},
	bdsk-url-1 = {https://doi.org/10.1090/S0002-9947-2012-05594-2}}

@article{MR3019512,
	author = {del Pino, Manuel and Kowalczyk, Michal and Wei, Juncheng},
	fjournal = {Journal of Differential Geometry},
	issn = {0022-040X,1945-743X},
	journal = {J. Differential Geom.},
	mrclass = {35J91 (35B08 53A10)},
	mrnumber = {3019512},
	mrreviewer = {Sanjiban\ Santra},
	number = {1},
	pages = {67--131},
	title = {Entire solutions of the {A}llen-{C}ahn equation and complete embedded minimal surfaces of finite total curvature in {$\Bbb R^3$}},
	url = {http://projecteuclid.org/euclid.jdg/1357141507},
	volume = {93},
	year = {2013},
	bdsk-url-1 = {http://projecteuclid.org/euclid.jdg/1357141507}}

@article{KeleiWang,
	author = {Wang, Kelei},
	doi = {10.4171/JEMS/734},
	fjournal = {Journal of the European Mathematical Society (JEMS)},
	issn = {1435-9855,1435-9863},
	journal = {J. Eur. Math. Soc. (JEMS)},
	mrclass = {35J91 (35B06 35B08 35B25)},
	mrnumber = {3713000},
	mrreviewer = {Jean\ Van Schaftingen},
	number = {10},
	pages = {2997--3051},
	title = {A new proof of {S}avin's theorem on {A}llen-{C}ahn equations},
	url = {https://doi.org/10.4171/JEMS/734},
	volume = {19},
	year = {2017},
	bdsk-url-1 = {https://doi.org/10.4171/JEMS/734}}

@article {Nick,
    AUTHOR = {Edelen, Nick and Spolaor, Luca},
     TITLE = {Regularity of minimal surfaces near quadratic cones},
   JOURNAL = {Ann. of Math. (2)},
  FJOURNAL = {Annals of Mathematics. Second Series},
    VOLUME = {198},
      YEAR = {2023},
    NUMBER = {3},
     PAGES = {1013--1046},
      ISSN = {0003-486X,1939-8980},
   MRCLASS = {49Q05 (35J93 49Q20 53A10)},
  MRNUMBER = {4660135},
MRREVIEWER = {Annalisa\ Cesaroni},
       DOI = {10.4007/annals.2023.198.3.2},
       URL = {https://doi.org/10.4007/annals.2023.198.3.2},
}

@article {Solomon,
    AUTHOR = {Simon, Leon and Solomon, Bruce},
     TITLE = {Minimal hypersurfaces asymptotic to quadratic cones in {${\bf
              R}^{n+1}$}},
   JOURNAL = {Invent. Math.},
  FJOURNAL = {Inventiones Mathematicae},
    VOLUME = {86},
      YEAR = {1986},
    NUMBER = {3},
     PAGES = {535--551},
      ISSN = {0020-9910,1432-1297},
   MRCLASS = {49F10 (53A10)},
  MRNUMBER = {860681},
MRREVIEWER = {Harold\ Parks},
       DOI = {10.1007/BF01389267},
       URL = {https://doi.org/10.1007/BF01389267},
}

\end{document}